\title{Analysis of Boundary-Domain Integral Equations based on a new parametrix for the mixed diffusion BVP with variable coefficient in an interior Lipschitz domain}
\author{S. E. Mikhailov, C.F. Portillo}
\date{ }
\newenvironment{proof}{\paragraph{Proof:}}{\hfill$\square$}
\newtheorem{theorem}{Theorem}[section]
\newtheorem{lemma}[theorem]{Lemma}
\newtheorem{corollary}[theorem]{Corollary}
\numberwithin{equation}{section}
\begin{document}
\maketitle

\begin{abstract}
A mixed boundary value problem for the partial differential equation of diffusion in an inhomogeneous medium in a Lipschitz domain is reduced to a system of direct segregated parametrix-based Boundary-Domain Integral Equations (BDIEs). We use a parametrix different from the one employed in the papers by Mikhailov {(2002, 2006) and Chkadua, Mikhailov, Natroshvili (2009)}. We prove the equivalence between the original BVP and the corresponding BDIE system. The invertibility and Fredholm properties of the boundary-domain integral operators are also analysed.
\end{abstract}

\section{Introduction}
 { Boundary Domain Integral Equations (BDIEs) associated with variable-coefficient PDEs were studied in
\cite{CMN-1} for a scalar mixed elliptic BVP in bounded domains, \cite{exterior, carlos3} for the corresponding problem in unbounded domains, \cite{carlos1} for the mixed problem for the incompressible Stokes system in bounded domains, \cite{carlos4, carlos5} for the mixed problem for the Compressible Stokes in bounded domains and \cite{dufera}} for a 2D mixed elliptic problem in bounded domains. Further results on the theory of BDIEs for BVPs with variable coefficient can be found on
\cite{carlos2, localised,united, traces,numerics,miksolandreg,dufera,ayele}. Let us note that these types of BVPs model, for example, the heat transfer in inhomogeneous media or the motion of a laminar fluid with variable viscosity. 

{ The BDIE systems can be solved numerically after discretising them e.g. by the collocation method, cf. \cite{MikMoh2012, RS2013, RS2014}, which leads to the linear algebraic systems with fully populated matrices. The method performance is essentially improved by implementing hierarchical matrix compression technique in conjunction with the adaptive cross approximation  procedure to  \cite{numerics} and iterative methods, cf. \cite{SHK1999}. 
Another option is to discretise the {\em localised} version of BDIEs, based on the localised parametrices, which leads to systems of linear algebraic equations with sparse matrices \cite{localised2,sladek, RS2013, RS2014}.  
}
 
In order to deduce a BDIE system for a BVP with variable coefficients, usually a parametrix (Levi function) 
strongly related with the fundamental solution of the corresponding PDE with constant coefficients is employed. Using this relation, it is possible to establish further relations between the surface and volume potential type operators for the variable-coefficient case with their counterparts for the constant coefficient case, see, e.g. \cite[Eq. (3.10)-(3.13)]{CMN-1}, \cite[Eq. (34.10)-(34.16)]{carlos1}. 

For  the scalar operator
\begin{align}\label{Adef}
Au(x) := \sum_{i=1}^{3}\dfrac{\partial}{\partial x_{i}}\left(a(x)\dfrac{\partial u(x)}{\partial x_{i}}\right),
\end{align}
a parametrics 
 $$P^{y}(x,y)= P(x,y;a(y))=\dfrac{-1}{4\pi a(y)\vert x-y\vert}$$
has been employed in \cite{CMN-1,miksolandreg,exterior}, { where $x$ is the integration variable in the parametrix-based integral potentials}. 
Note that the superscript in $P^{y}(x,y)$ means that the parametrix is expressed in terms of the variable coefficient at point $y$.

There are many different ways of { constructing} parametrices and corresponding parametrix-based potentials and BDIEs,
for the same variable-coefficient PDE{, and performance of the BDIE-based numerical methods essentially depends on the chosen parametrix. To optimise the numerical method, it is beneficial to analyse the BDIEs based on different parametrices.}  
It appeared, however, that not always the corresponding parametrix-based potentials and BDIEs can be easily analysed.
The main motivation of this paper is to extend the collection of parametrices for which the analysis of the parametrix-based potentials and BDIEs is tractable. 
This will then allow to chose the tractable parametrices with more preferable properties, e.g., for numerical implementation. To this end, we employ  \textit{in this paper} the parametrix
  $$P^{x}(x,y)= P(x,y;a(x))=\dfrac{-1}{4\pi a(x)\vert x-y\vert}$$
for the same operator $A$ defined by \eqref{Adef}, { where $x$ is again the integration variable in the parametrix-based integral potentials.

Different families of parametrices lead to different relations with their counterparts for the constant coefficient case.  For the parametrices considered in this paper these relations are rather simple, which makes it possible to obtain the mapping properties of the integral potentials in Sobolev spaces and prove the equivalence between the BDIE system and the BVP. 
After studying the Fredholm properties of the matrix operator which defines the systems, their invertibility is proved, which implies the uniqueness of solution of the BDIE system.}

\section{Preliminaries and the BVP}
{ Let $\Omega=\Omega^{+}$ be a bounded simply connected open Lipschitz domain, $\Omega^{-}:=\mathbb{R}^{3}\setminus\overline{\Omega^{+}}$ the complementary (unbounded) domain. 
The Lipschitz boundary $\partial\Omega$ is connected and closed. 
Furthermore, $\partial\Omega :=\overline{\partial\Omega_{N}}\cup \overline{\partial\Omega_{D}}$ where both $\partial\Omega_{N}$ and $\partial\Omega_{D}$ are non-empty, connected disjoint Lipschitz submanifolds of $\partial\Omega$ with a Lipschitz interface between them. }

Let us consider the partial differential equation 
\begin{equation}\label{ch4operatorA}
Au(x):=\sum_{i=1}^{3}\dfrac{\partial}{\partial x_{i}}\left(a(x)\dfrac{\partial u(x)}{\partial x_{i}}\right)=f(x),\,\,x\in \Omega,
\end{equation}
where the variable smooth coefficient $a(x)\in \mathcal{C}^{\infty}(\overline{\Omega})$ is such that
\begin{align}\label{a-cond}
0<a_{\rm min}\le a(x)\le a_{\rm max}<\infty,\quad\forall x\in \overline{\Omega},
\end{align}
$u(x)$ is an unknown function and $f$ is a given function on $\Omega$. It is easy to see that if $a\equiv 1$ then, the operator $A$ becomes the Laplace operator, $\Delta$.
 
 We will use the following function spaces in this paper (see e.g. \cite{lions, McLean2000} for more details). Let $\mathcal{D}'(\Omega)$ be the
Schwartz distribution space; $H^{s}(\Omega)$ and $H^{s}(\partial\Omega)$ with $s\in \mathbb{R}$ be
the Bessel potential spaces; the space $\widetilde H^{s}(\Omega)$ consisting of all the distributions of $H^{s}(\mathbb{R}^{3})$ whose support belongs to the closed set $\overline\Omega$. 
{ 
The corresponding spaces in $\Omega^-$ are defined similarly.
We will also need the following  spaces on a boundary subset $S_1$,
$
\widetilde{H}^{s}(S_1):= \overline{\mathcal{C}^{\infty}_{0}(S_1)}^{\parallel\cdot\parallel_{H^{s}(\mathbb{R}^{3})}},
$ 
which can be characterized as $\widetilde{H}^{s}(S_{1})=\lbrace g\in H^{s}(\partial\Omega): {\rm supp}(g)\subset \overline{S_{1}}\rbrace$, while $H^{s}(S_{1}):=\lbrace r_{S_{1}}g : g\in H^{s}(\partial\Omega)\rbrace$}, where the notation $r_{S_{1}}g$ is used for the restriction of the function $g$ from $\partial\Omega$ to $S_{1}$.  
 
We will make use of the space
\begin{center}
 $H^{1,0}(\Omega; A):= \lbrace u \in H^{1}(\Omega): Au\in L^{2}(\Omega)\rbrace, $
 \end{center} 
{ see e.g. \cite{Grisvard1985, costabel, traces},} which is a Hilbert space with the norm defined by
\begin{center}
$\parallel u \parallel^{2}_{H^{1,0}(\Omega; A)}:=\parallel u \parallel^{2}_{H^{1}(\Omega)}+\parallel Au  \parallel^{2}_{L^{2}(\Omega)}$.
\end{center}

\paragraph{\bf Traces and conormal derivatives.}
For a scalar function $w\in H^{s}(\Omega^\pm)$, $1/2<s$, the traces $\gamma^{\pm}w\in H^{s-\frac{1}{2}}(\partial\Omega)$  on the Lipschitz boundary $\partial\Omega$ are well defined. 
Moreover, if $1/2<s<3/2$, the corresponding trace operators  $\gamma^{\pm}:=\gamma_{\partial\Omega}^\pm: H^{s}(\Omega^\pm)\to H^{s-\frac{1}{2}}(\partial\Omega)$ are continuous (see, e.g., \cite{McLean2000, traces}).

For $u\in H^{s}(\Omega)$, $s>3/2$, we can define on $\partial\Omega$ the  conormal derivative operator, $T^{\pm}$, in the classical (trace) sense
\begin{equation*}\label{ch4conormal}
T^{\pm}_{x}u := \sum_{i=1}^{3}a(x)\gamma^{\pm}\left( \dfrac{\partial u}{\partial x_{i}}\right)n_{i}^{\pm}(x),
\end{equation*}
where $n^{+}(x)$ is the exterior unit normal vector directed \textit{outwards} the interior domain $\Omega$ at a point $x\in \partial\Omega$. Similarly, $n^{-}(x)$ is the unit normal vector directed \textit{inwards} the interior domain $\Omega$ at a point $x\in \partial\Omega$. 
Sometimes we will also use the notation $T^{\pm}_{x}u$ or $T^{\pm}_{y}u$ to emphasise which respect to which variable we are differentiating. Note that when the variable coefficient $a\equiv 1$, the operator $T^{\pm}$ becomes the classical normal derivative $\partial^{\pm}_{n}$.

Moreover, for any function $u\in H^{1,0}(\Omega; A)$, the \textit{canonical} conormal derivative $T^{\pm}u\in H^{-\frac{1}{2}}(\Omega)$, is well defined, cf. \cite{costabel,McLean2000,traces},
\begin{equation}\label{ch4green1}
\langle T^{\pm}u, w\rangle_{\partial\Omega}:= \pm \int_{\Omega^{\pm}}[(\gamma^{-1}\omega)Au +E(u,\gamma^{-1}w)] dx,\,\, w\in H^{\frac{1}{2}}(\partial\Omega),
\end{equation}
where $\gamma^{-1}: H^{\frac{1}{2}}(\partial\Omega)\to H^{1}(\mathbb{R}^{3})$ is a continuous right inverse to the trace operator whereas the function $E$ is defined as
\begin{equation*}\label{ch4functionalE}
E(u,v)(x):=\sum_{i=1}^{3}a(x)\dfrac{\partial u(x)}{\partial x_{i}}\dfrac{\partial v(x)}{\partial x_{i}},
\end{equation*}
and $\langle\, \cdot \, , \, \cdot \, \rangle_{\partial\Omega}$ represents the $L^{2}-$based dual form on $\partial\Omega$.

We aim to derive boundary-domain integral equation systems for the following \textit{mixed} boundary value problem. Given $f\in L^{2}(\Omega)$, $\phi_{0}\in H^{\frac{1}{2}}(\partial\Omega_D)$ and $\psi_{0}\in H^{-\frac{1}{2}}(\partial\Omega_N)$, we seek a function $u\in H^{1}(\Omega)$ such that 
\begin{subequations}\label{ch4BVP}
\begin{align}
Au&=f,\hspace{1em}\text{in}\hspace{1em}\Omega\label{ch4BVP1};\\
r_{\partial\Omega_{D}}\gamma^{+}u &= \phi_{0},\hspace{1em}\text{on}\hspace{1em} \partial\Omega_{D}\label{ch4BVP2};\\
r_{\partial\Omega_{N}}T^{+}u &=\psi_{0},\hspace{1em}\text{on}\hspace{1em} \partial\Omega_{N};\label{ch4BVP3}
\end{align}
\end{subequations}
where equation \eqref{ch4BVP1} is understood in the weak sense, the Dirichlet condition \eqref{ch4BVP2} is understood in the trace sense, the Neumann condition \eqref{ch4BVP3} is understood in the functional sense \eqref{ch4green1}, { $r_{\partial\Omega_{D}}$ and $r_{\partial\Omega_{N}}$ are restrictions of the functions (or distributions) from $\partial\Omega$ to $\partial\Omega_{D}$ and $\partial\Omega_{N}$, respectively}.

By Lemma 3.4 of \cite{costabel} (cf. also Theorem 3.9 in \cite{traces} for a more general case), the first Green identity holds for any $u\in H^{1,0}(\Omega; A)$ and $v\in H^{1}(\Omega)$,
\begin{equation}\label{ch4green1.1}
\langle T^{\pm}u, \gamma^{+}v\rangle_{\partial\Omega}:= \pm \int_{\Omega}[vAu +E(u,v)] dx.
\end{equation}
The following assertion is well known and can be proved, e.g., using the Lax-Milgram lemma as e.g. in \cite[Theorem 4.11]{steinbach}. 
\begin{theorem}\label{ch4Thomsol}
If the coefficient $a$ satisfies condition \eqref{a-cond}, then the mixed problem \eqref{ch4BVP} has one and only one  solution in $H^1(\Omega)$.
\end{theorem}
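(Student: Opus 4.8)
The plan is to recast the mixed problem \eqref{ch4BVP} as an equivalent coercive variational problem on a closed subspace of $H^1(\Omega)$ and then invoke the Lax--Milgram lemma. First I would homogenise the Dirichlet datum: since $\phi_0\in H^{\frac12}(\partial\Omega_D)$ admits an extension $\Phi\in H^{\frac12}(\partial\Omega)$, its lift $u_0:=\gamma^{-1}\Phi\in H^1(\Omega)$ satisfies $r_{\partial\Omega_D}\gamma^{+}u_0=\phi_0$, and writing $u=u_0+\tilde u$ reduces the problem to finding $\tilde u$ in the closed subspace
$$H^{1}_{0,D}(\Omega):=\{v\in H^1(\Omega): r_{\partial\Omega_D}\gamma^{+}v=0\}.$$

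Next I would derive the weak formulation. Introducing the bilinear form
$$\mathcal{B}(u,v):=\int_\Omega E(u,v)\,dx=\int_\Omega a\,\nabla u\cdot\nabla v\,dx,$$
the first Green identity \eqref{ch4green1.1} shows that, for $u\in H^{1,0}(\Omega;A)$ and $v\in H^{1}_{0,D}(\Omega)$, one has $\mathcal{B}(u,v)=\langle T^{+}u,\gamma^{+}v\rangle_{\partial\Omega}-\int_\Omega vAu\,dx$. As $\gamma^{+}v$ vanishes on $\partial\Omega_D$ (so that $\gamma^{+}v\in\widetilde H^{\frac12}(\partial\Omega_N)$), the duality term collapses to the Neumann contribution $\langle\psi_0,r_{\partial\Omega_N}\gamma^{+}v\rangle_{\partial\Omega_N}$ once \eqref{ch4BVP3} is imposed, while $Au=f$. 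Hence the problem is equivalent to: find $\tilde u\in H^{1}_{0,D}(\Omega)$ such that
$$\mathcal{B}(\tilde u,v)=-\int_\Omega f\,v\,dx+\langle\psi_0,r_{\partial\Omega_N}\gamma^{+}v\rangle_{\partial\Omega_N}-\mathcal{B}(u_0,v)\qquad\forall\,v\in H^{1}_{0,D}(\Omega).$$

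Then I would check the Lax--Milgram hypotheses. Boundedness of $\mathcal{B}$ is immediate from the upper bound $a\le a_{\rm max}$ in \eqref{a-cond} and Cauchy--Schwarz, and the right-hand side defines a bounded linear functional on $H^{1}_{0,D}(\Omega)$ by continuity of the trace operator, of the lift $\gamma^{-1}$, and of the duality pairing on $\partial\Omega_N$. The essential point is coercivity: using the lower bound $a\ge a_{\rm min}$ gives $\mathcal{B}(v,v)\ge a_{\rm min}\|\nabla v\|_{L^2(\Omega)}^2$, and I would upgrade the gradient seminorm to the full $H^1(\Omega)$-norm via a Friedrichs/Poincar\'e inequality valid on $H^{1}_{0,D}(\Omega)$. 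This is the main obstacle, and it is exactly where the hypothesis that $\partial\Omega_D$ is nonempty (with positive surface measure) on the connected Lipschitz domain $\Omega$ is used: the vanishing of the trace on a set of positive measure rules out the nonzero constants and yields the inequality. Lax--Milgram then delivers a unique $\tilde u$, hence a unique $u=u_0+\tilde u$ solving the variational problem.

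Finally I would confirm that the variational solution solves \eqref{ch4BVP} in the stated senses and lies in the correct space. Taking test functions in $\mathcal{C}^{\infty}_{0}(\Omega)\subset H^{1}_{0,D}(\Omega)$ shows $Au=f$ in $\mathcal{D}'(\Omega)$; since $f\in L^2(\Omega)$ this gives $u\in H^{1,0}(\Omega;A)$, so that $T^{+}u$ is well defined by \eqref{ch4green1}. The Dirichlet condition \eqref{ch4BVP2} holds by construction, and reading the Green identity backwards against general $v\in H^{1}_{0,D}(\Omega)$ recovers the Neumann condition \eqref{ch4BVP3} in the functional sense. Uniqueness is inherited from Lax--Milgram: independence from the particular extension $\Phi$ and lift $u_0$ follows because any two admissible choices differ by an element of $H^{1}_{0,D}(\Omega)$, which is absorbed into $\tilde u$.
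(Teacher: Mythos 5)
Your proposal is correct and follows essentially the same route as the paper, which does not spell out a proof but simply invokes the Lax--Milgram lemma as in \cite[Theorem 4.11]{steinbach} --- precisely the homogenisation-plus-coercive-variational-formulation argument you give. Your write-up supplies the details (extension of $\phi_0$, Friedrichs/Poincar\'e inequality on $H^{1}_{0,D}(\Omega)$, recovery of the Neumann condition via the canonical conormal derivative) that the cited reference contains.
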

 
\section{Parametrices and remainders}

For the differential operator $A$ presented in \eqref{Adef}, we define a parametrix (Levi function) $P(x,y)$ as a function of two (vector) variables $x$ and $y$ such that
 \begin{equation}\label{parametrixdef}
 A_{x}P(x,y) = \delta(x-y)+R(x,y),
 \end{equation}
where the notation $A_{x}$ indicates differentiating with respect to $x$, while the function $R(x,y)$ has at most weak singularity when $x=y$. 
For a given operator $A$, the parametrix is not unique. For example, the parametrix 
\begin{equation*}\label{ch4P2002}
P^y(x,y)=\dfrac{1}{a(y)} P_\Delta(x-y),\hspace{1em}x,y \in \mathbb{R}^{3},
\end{equation*} 
was employed in \cite{localised, CMN-1, mikhailovlipschitz}, for the operator $A$ defined in \eqref{Adef},  where
\begin{equation*}\label{ch4fundsol}
P_{\Delta}(x-y) = \dfrac{-1}{4\pi \vert x-y\vert}
\end{equation*}
is the fundamental solution of the Laplace operator.
The remainder corresponding to the parametrix $P^{y}$ is 
\begin{equation*}
\label{ch43.4} R^y(x,y)
=\sum\limits_{i=1}^{3}\frac{1}{a(y)}\, \frac{\partial a(x)}{\partial x_i} \frac{\partial }{\partial x_i}P_\Delta(x-y)
\,,\;\;\;x,y\in {\mathbb R}^3.
\end{equation*}

{\em In this paper}, for the same operator $A$, we will use another parametrix,  
\begin{align}\label{ch4Px}
P(x,y):=P^x(x,y)=\dfrac{1}{a(x)} P_\Delta(x-y),\hspace{1em}x,y \in \mathbb{R}^{3},
\end{align}
which leads to the corresponding remainder 
\begin{align}\label{ch4remainder}
R(x,y) =R^x(x,y) &= 
-\sum\limits_{i=1}^{3}\dfrac{\partial}{\partial x_{i}}
\left(\frac{1}{a(x)}\dfrac{\partial a(x)}{\partial x_{i}}P_{\Delta}(x,y)\right)\\
& =
-\sum\limits_{i=1}^{3}\dfrac{\partial}{\partial x_{i}}
\left(\dfrac{\partial \ln a(x)}{\partial x_{i}}P_{\Delta}(x,y)\right),\hspace{0.5em}x,y \in \mathbb{R}^{3}.
\nonumber
\end{align}

Note that if the variable coefficient $a$ is smooth enough, then
\[
R^x(x,y),\,R^y(x,y)\in \mathcal{O}(\vert x-y\vert^{-2}) \mbox{ as } x\to y,
\]
i.e., the both remainders $R_x$ and $R_y$ are indeed weakly singular. 

\section{Volume and surface potentials}

For the function $g$ defined on a domain $\Omega_+\subset\mathbb R^n$, e.g., $\rho\in \mathcal D(\overline\Omega)$, the volume parametrix-based Newton-type potential and the remainder potential are respectively defined, for $y\in\mathbb R^3$, as
\begin{align*}
\mathcal{P}\rho(y)&:=\langle P(\cdot,y),g\rangle_{\Omega}=\displaystyle\int_{\Omega} P(x,y)\rho(x)\hspace{0.25em}dx\\
\mathcal{R}\rho(y)&:=\langle R(\cdot,y),g\rangle_{\Omega}=\displaystyle\int_{\Omega} R(x,y)\rho(x)\hspace{0.25em}dx.
\end{align*}
From definitions \eqref{ch4Px}, \eqref{ch4remainder}, the operators $\mathcal P$ and $\mathcal R$ can be expressed in terms the Newtonian potential associated with the Laplace operator, 
\begin{align}
\mathcal{P}\rho&=\mathcal{P}_{\Delta}\left(\dfrac{\rho}{a}\right),\label{ch4relP}\\
\mathcal{R}\rho&=\nabla\cdot\left[\mathcal{P}_{\Delta}(\rho\,\nabla \ln a)\right]-\mathcal{P}_{\Delta}(\rho\,\Delta \ln a).\label{ch4relR}
\end{align}
Relations \eqref{ch4relP} and \eqref{ch4relR} will be used also to determine the operators $\mathcal P$ and $\mathcal R$ also for more general spaces for $\rho$ and to obtain, similar to \cite[Theorem 3.2]{mikhailovlipschitz}, the following  mapping properties of the parametrix-based volume operators from the well-known (cf., e.g., \cite{costabel}) properties of the Newtonian potential associated with the Laplace equation. 
 \begin{theorem}\label{ch4thmUR} 
 Let $s\in \mathbb{R}$. Then, the following operators are continuous,
 \begin{align}
 \mathcal{P}&:\widetilde{H}^{s}(\Omega) \to H^{s+2}(\Omega),\hspace{0.5em} s\in \mathbb{R}, 
 \\
 \mathcal{P}&: H^{s}(\Omega) \to H^{s+2}(\Omega),\hspace{0.5em} -\dfrac{1}{2}<s<\frac{1}{2}, 
 \\
   {\mathcal P}&: L_2(\Omega)\to
 H^{2,0}(\Omega;A),\\
 \mathcal{R}&:\widetilde{H}^{s}(\Omega) \to H^{s+1}(\Omega),\hspace{0.5em} s\in \mathbb{R}, \label{ch4mpvp3}
 \\
 \mathcal{R}&: H^{s}(\Omega) \to H^{s+1}(\Omega),\hspace{0.5em} -\frac{1}{2}<s<\frac{1}{2}\,,
  \label{ch4mpvp4}
 \\
  \mathcal{R}&: H^{1}(\Omega) \to H^{1,0}(\Omega;A).
 \end{align}
Moreover, for $\frac{1}{2}<s<\frac{3}{2}$, the following operators are compact,
 \begin{align*}
 \mathcal{R}&: H^{s}(\Omega) \to H^{s}(\Omega),\\
 r_{S_{1}}\gamma^{+}\mathcal{R}&: H^{s}(\Omega) \to H^{s-\frac{1}{2}}(S_{1}),\\
 r_{S_{1}}T^{+}\mathcal{R}&: H^{s}(\Omega) \to H^{s-\frac{3}{2}}(S_{1}).
 \end{align*}
 \end{theorem}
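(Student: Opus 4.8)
The plan is to reduce every assertion to the classical mapping properties of the Newtonian (harmonic) potential $\mathcal{P}_\Delta$ --- namely $\mathcal{P}_\Delta:\widetilde H^{s}(\Omega)\to H^{s+2}(\Omega)$ for all $s$, and $\mathcal{P}_\Delta:H^{s}(\Omega)\to H^{s+2}(\Omega)$ for $-1/2<s<1/2$ --- combined with three elementary facts: multiplication by the smooth, bounded, non-degenerate functions $1/a$, $\nabla\ln a$ and $\Delta\ln a$ is bounded on each Sobolev space in play; the divergence is bounded $H^{t}(\Omega)\to H^{t-1}(\Omega)$; and $\Delta\mathcal{P}_\Delta=\mathrm{Id}$. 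The continuity statements for $\mathcal P$ follow at once from \eqref{ch4relP}: writing $\mathcal P\rho=\mathcal P_\Delta(\rho/a)$, the multiplier $1/a$ preserves $\widetilde H^{s}(\Omega)$ and $H^{s}(\Omega)$, after which $\mathcal P_\Delta$ supplies the two-derivative gain in each admissible range. The continuity statements for $\mathcal R$ follow the same way from \eqref{ch4relR}: each of the terms $\nabla\cdot\mathcal P_\Delta(\rho\,\nabla\ln a)$ and $\mathcal P_\Delta(\rho\,\Delta\ln a)$ gains two derivatives through $\mathcal P_\Delta$, and the first then loses one through the divergence, so the worst term yields the net one-derivative gain claimed in \eqref{ch4mpvp3}--\eqref{ch4mpvp4}.

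Next I would settle the two ``$H^{\cdot,0}(\Omega;A)$'' statements by computing $A$ of the potentials. For $\rho\in L_2(\Omega)$ set $v=\mathcal P\rho=\mathcal P_\Delta(\rho/a)\in H^{2}(\Omega)$; since $\Delta v=\rho/a$ and $Av=a\,\Delta v+\nabla a\cdot\nabla v$, I obtain $A\mathcal P\rho=\rho+\nabla a\cdot\nabla v\in L_2(\Omega)$, whence $\mathcal P:L_2(\Omega)\to H^{2,0}(\Omega;A)$. For $\rho\in H^{1}(\Omega)$, I first note $\mathcal R\rho\in H^{1}(\Omega)$ by \eqref{ch4mpvp4} with $s=0$, and then observe the cancellation $\Delta\mathcal R\rho=\nabla\cdot(\rho\,\nabla\ln a)-\rho\,\Delta\ln a=\nabla\rho\cdot\nabla\ln a$, which lies in $L_2(\Omega)$ precisely because $\rho\in H^{1}$; hence $A\mathcal R\rho=a\,\Delta\mathcal R\rho+\nabla a\cdot\nabla\mathcal R\rho\in L_2(\Omega)$ and $\mathcal R:H^{1}(\Omega)\to H^{1,0}(\Omega;A)$. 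This is the step where the full $H^{1}$-regularity of $\rho$, rather than mere $L_2$, is essential.

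For the compactness block the common device is to trade the one-derivative gain against the compact Sobolev embeddings on the bounded Lipschitz sets $\Omega$ and $S_1$. Given $1/2<s<3/2$, I choose an intermediate exponent $\sigma$ with $s-1<\sigma<1/2$ (possible exactly because $s<3/2$); since $\sigma<s$ there is the continuous inclusion $H^{s}(\Omega)\hookrightarrow H^{\sigma}(\Omega)$, and \eqref{ch4mpvp4} gives $\mathcal R:H^{\sigma}(\Omega)\to H^{\sigma+1}(\Omega)$ with $\sigma+1\in(s,3/2)$. Compactness of $\mathcal R:H^{s}\to H^{s}$ is then immediate from the compact embedding $H^{\sigma+1}(\Omega)\hookrightarrow H^{s}(\Omega)$; applying the continuous trace $\gamma^{+}:H^{\sigma+1}(\Omega)\to H^{\sigma+1/2}(\partial\Omega)$, restricting to $S_1$, and using the compact embedding $H^{\sigma+1/2}(S_1)\hookrightarrow H^{s-1/2}(S_1)$ (valid since $\sigma>s-1$) disposes of the second claim.

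The main obstacle is the third compactness statement, for $r_{S_1}T^{+}\mathcal R$, because the conormal derivative is not a plain trace: it is only canonically defined on spaces that also control $A$ of the argument. The plan is to invoke the canonical conormal derivative operator of \cite{traces} on the scale $u\in H^{\sigma+1}(\Omega)$ with $Au\in\widetilde H^{\sigma-1}(\Omega)$, which maps continuously into $H^{\sigma-1/2}(\partial\Omega)$ for $\sigma+1\in(1/2,3/2)$. To apply it I must verify that $A\mathcal R\rho$ lands in $\widetilde H^{\sigma-1}(\Omega)$: from $\Delta\mathcal R\rho=\nabla\rho\cdot\nabla\ln a\in H^{s-1}(\Omega)$ and $\nabla a\cdot\nabla\mathcal R\rho\in H^{\sigma}(\Omega)$, with $s-1,\sigma\in(-1/2,1/2)$ so that these coincide with the corresponding $\widetilde H$-spaces and embed into $\widetilde H^{\sigma-1}(\Omega)$, I get $A\mathcal R\rho\in\widetilde H^{\sigma-1}(\Omega)$ continuously in $\rho$. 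Hence $T^{+}\mathcal R:H^{s}(\Omega)\to H^{\sigma-1/2}(\partial\Omega)$ is bounded, and since $\sigma>s-1$ forces $\sigma-1/2>s-3/2$, the compact embedding $H^{\sigma-1/2}(S_1)\hookrightarrow H^{s-3/2}(S_1)$ after restriction finishes the proof. I expect the bookkeeping over which extension of $A\mathcal R\rho$ is used, so that one obtains the \emph{canonical} conormal derivative and not merely some conormal derivative, to be the most delicate point.
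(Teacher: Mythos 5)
Your proposal is correct and takes essentially the same approach as the paper: the paper obtains Theorem \ref{ch4thmUR} precisely by transferring the well-known mapping properties of the harmonic Newtonian potential through relations \eqref{ch4relP} and \eqref{ch4relR} (citing \cite[Theorem 3.2]{mikhailovlipschitz} and \cite{costabel}), which is exactly what you carry out. Your added details --- the product rule giving $A\mathcal{P}\rho=\rho+\nabla a\cdot\nabla\mathcal{P}\rho$, the cancellation $\Delta\mathcal{R}\rho=\nabla\rho\cdot\nabla\ln a$, the intermediate-index-plus-Rellich device for the compactness claims, and the use of the canonical conormal derivative of \cite{traces} (legitimate here since $A\mathcal{R}\rho\in H^{s-1}(\Omega)=\widetilde{H}^{s-1}(\Omega)$ with $s-1>-\frac{1}{2}$, so the extension is unique and canonical) --- are the standard fillings-in of the argument the paper delegates to those references.
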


The parametrix-based single layer and double layer  surface potentials are defined for $y\in\mathbb R^3:y\notin \partial\Omega $, as 
\begin{equation*}\label{ch4SL}
V\rho(y):=-\int_{\partial\Omega} P(x,y)\rho(x)\hspace{0.25em}dS(x),
\end{equation*}
\begin{equation*}\label{ch4DL}
W\rho(y):=-\int_{\partial\Omega} T_{x}^{+}P(x,y)\rho(x)\hspace{0.25em}dS(x).
\end{equation*}
Due to \eqref{ch4Px}, the operators $V$ and $W$ can be also expressed in terms the surface potentials and operators associated with the Laplace operator,
\begin{align}
V\rho &= V_{\Delta}\left(\dfrac{\rho}{a}\right),\label{ch4relSL}\\
W\rho &= W_{\Delta}\rho -V_{\Delta}\left(\rho\frac{\partial \ln a}{\partial n}\right),\label{ch4relDL}
\end{align}
We will use relations \eqref{ch4relSL} and \eqref{ch4relDL}  to determine the operators $V$ and $W$ also for more general spaces for $\rho$ and, using the corresponding properties for the layer potentials based on a fundamental solution, on Lipschitz domains (cf., e.g., \cite{costabel}), to obtain, similar to \cite[Theorem 3.5]{mikhailovlipschitz}, the following mapping and jump properties in Theorems \ref{T3.1s0} and \ref{T3.1sJ}.

\begin{theorem}\label{T3.1s0}
Let  $\Omega$ be a bounded Lipschitz domain.
The following operators are continuous if  $\frac{1}{2}< s< \frac{3}{2}$,
 \begin{eqnarray}
\mu V  &:& H^{s-\frac{3}{2}}(\partial\Omega) \to H^{s}(\mathbb R^n),\quad \forall\ \mu\in\mathcal D(\mathbb R^n);\label{VHs1}\\
r_{\Omega}W  &:&  H^{s-\frac{1}{2}}(\partial\Omega)\to H^{s}(\Omega);\label{WHs1}\\
\mu\,r_{\Omega_-}W  &:&  H^{s-\frac{1}{2}}(\partial\Omega)\to H^{s}(\Omega_-),\quad \forall\ \mu\in\mathcal D(\mathbb R^n)\label{WHs1-};
\\
r_{\Omega}V  &:& H^{-\frac{1}{2}}(\partial\Omega) \to H^{1,0}( {\Omega;A}); \label{VHs1Ga}\\
\mu\,r_{\Omega_-}V  &:& H^{-\frac{1}{2}}(\partial\Omega) \to H^{1,0}({\Omega_-;A}),\quad \forall\ \mu\in\mathcal D(\mathbb R^n);\label{VHs1Ga-}\\
r_{\Omega}W  &:&  H^{\frac{1}{2}}(\partial\Omega)\to  H^{1,0}( {\Omega;A});\label{WHs1Ga}\\
\mu\,r_{\Omega_-}W  &:&  H^{\frac{1}{2}}(\partial\Omega)\to  H^{1,0}( {\Omega_-;A}),\quad \forall\ \mu\in\mathcal D(\mathbb R^n);\label{WHs1Ga-}\\
\gamma^\pm V  &:& H^{s-\frac{3}{2}}(\partial\Omega) \to H^{s-\frac{1}{2}}(\partial\Omega);\label{VHs1g}\\
\gamma^\pm W  &:&  H^{s-\frac{1}{2}}(\partial\Omega)\to H^{s-\frac{1}{2}}(\partial\Omega);\label{WHs1g}\\
T^\pm V  &:& H^{s-\frac{3}{2}}(\partial\Omega) \to H^{s-\frac{3}{2}}(\partial\Omega);\label{VHs1GaT}\\
T^\pm W  &:&  H^{s-\frac{1}{2}}(\partial\Omega)\to  H^{s-\frac{3}{2}}(\partial\Omega).\label{WHs1GaT}
\end{eqnarray}
\end{theorem}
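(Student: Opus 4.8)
The plan is to reduce every operator in the statement to its counterpart for the Laplacian by means of the relations \eqref{ch4relSL} and \eqref{ch4relDL}, and then to invoke the known mapping and jump properties of the harmonic single- and double-layer potentials $V_\Delta$, $W_\Delta$ on Lipschitz domains (cf. \cite{costabel}). The coefficient $a$ enters only through the multipliers $1/a$ and $\nabla\ln a\cdot n$. Since $a\in\mathcal C^\infty(\overline\Omega)$ satisfies \eqref{a-cond}, the function $1/a$ is again smooth and bounded away from $0$, so multiplication by $1/a$ is continuous on every $H^t(\partial\Omega)$ with $|t|\le 1$; this range of admissible orders on a Lipschitz boundary comfortably contains all the exponents $s-\frac32$, $s-\frac12$, $s$ that occur for $\frac12<s<\frac32$. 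This disposes of all the assertions involving $V$ at once: writing $V\rho=V_\Delta(\rho/a)$, I would compose the bounded multiplication $\rho\mapsto\rho/a$ with the continuity of $V_\Delta$ known for the Laplacian, and then apply the relevant restriction or cutoff operator to obtain \eqref{VHs1}, \eqref{VHs1Ga} and \eqref{VHs1Ga-}. Here I would also use that $H^{1,0}(\Omega^\pm;\Delta)\subset H^{1,0}(\Omega^\pm;A)$, which follows since $Au=a\,\Delta u+\nabla a\cdot\nabla u$ with $a,\nabla a$ smooth and bounded.

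For $W$ I would treat the two terms of \eqref{ch4relDL} separately. The term $W_\Delta\rho$ inherits directly the harmonic double-layer mappings, producing that part of \eqref{WHs1}, \eqref{WHs1Ga} and \eqref{WHs1Ga-}. The corrective term $V_\Delta\bigl(\rho\,\nabla\ln a\cdot n\bigr)$ is where the difficulty lies, and I expect this to be the main obstacle. On a merely Lipschitz boundary the outward normal $n$ exists only almost everywhere and is bounded measurable, so the multiplier $\nabla\ln a\cdot n$ belongs only to $L^\infty(\partial\Omega)$ and \emph{not} to any smoother class; consequently multiplication by it cannot be expected to preserve positive-order Sobolev spaces, being continuous only on $L^2(\partial\Omega)=H^0(\partial\Omega)$. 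I would circumvent this by routing through $L^2$: for $\rho\in H^{s-\frac12}(\partial\Omega)$ with $0<s-\frac12<1$ one has $H^{s-\frac12}(\partial\Omega)\hookrightarrow L^2(\partial\Omega)$, multiplication by $\nabla\ln a\cdot n$ keeps the image in $L^2(\partial\Omega)=H^0(\partial\Omega)\hookrightarrow H^{s-\frac32}(\partial\Omega)$ since $s-\frac32<0$, and then $V_\Delta:H^{s-\frac32}(\partial\Omega)\to H^s(\Omega)$ delivers the correction term in $H^s(\Omega)$. For the $H^{1,0}$-valued claims one argues similarly, noting $L^2(\partial\Omega)\hookrightarrow H^{-\frac12}(\partial\Omega)$ and $V_\Delta:H^{-\frac12}(\partial\Omega)\to H^{1,0}(\Omega^\pm;\Delta)\subset H^{1,0}(\Omega^\pm;A)$. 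Combining the two contributions yields \eqref{WHs1}, \eqref{WHs1-}, \eqref{WHs1Ga} and \eqref{WHs1Ga-}.

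Finally, the boundary operators \eqref{VHs1g}--\eqref{WHs1GaT} would follow by composition with the trace and conormal operators introduced in the Preliminaries. Once the volume mappings are in hand, \eqref{VHs1g} and \eqref{WHs1g} come from composing with the continuous trace $\gamma^\pm:H^s(\Omega^\pm)\to H^{s-\frac12}(\partial\Omega)$, while \eqref{VHs1GaT} and \eqref{WHs1GaT} come from the canonical conormal derivative $T^\pm:H^{1,0}(\Omega^\pm;A)\to H^{-\frac12}(\partial\Omega)$ together with the fact, already established above, that $V$ and $W$ map into $H^{1,0}(\Omega^\pm;A)$. To reach the sharper target order $H^{s-\frac32}(\partial\Omega)$ rather than only $H^{-\frac12}(\partial\Omega)$ in \eqref{VHs1GaT}, \eqref{WHs1GaT}, I would transport the analogous jump relations for the harmonic conormal (normal) derivatives $\partial_n^\pm V_\Delta$ and $\partial_n^\pm W_\Delta$ through \eqref{ch4relSL} and \eqref{ch4relDL}, once more keeping careful track of the $L^\infty$-only multiplier in the double-layer correction exactly as in the previous paragraph.
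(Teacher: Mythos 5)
Your proposal is correct and follows essentially the same route as the paper: the paper also defines $V$ and $W$ on the wider spaces via the relations \eqref{ch4relSL} and \eqref{ch4relDL} and then transfers the known Lipschitz-domain mapping properties of the harmonic layer potentials (citing \cite{costabel} and \cite[Theorem 3.5]{mikhailovlipschitz}), exactly as you do. Your explicit handling of the merely $L^\infty$ multiplier $\partial_n \ln a$ by routing through $L^2(\partial\Omega)\hookrightarrow H^{s-\frac{3}{2}}(\partial\Omega)$ is a correct filling-in of details the paper leaves to the cited references.
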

\begin{theorem}\label{T3.1sJ}
Let  $\partial\Omega$ be a compact Lipschitz boundary,  $\frac{1}{2}< s< \frac{3}{2}$, $\varphi\in H^{s-\frac{1}{2}}(\partial\Omega)$ and $\psi\in H^{s-\frac{3}{2}}(\partial\Omega)$. Then
 \begin{eqnarray}
\gamma^+ V\psi -\gamma^- V\psi=0, 
&&
\gamma^+ W\varphi -\gamma^- W\varphi=-\varphi;\label{WHs1gj}\\
T^+ V\psi -T^- V\psi=\psi, 
&&
T^+ W\varphi -T^- W\varphi=-(\partial_n a)\varphi.\label{WHs1GaTj}
\end{eqnarray}
\end{theorem}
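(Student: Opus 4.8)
The plan is to reduce every jump to the corresponding classical jump relation for the harmonic single- and double-layer potentials $V_\Delta$ and $W_\Delta$ on the Lipschitz boundary $\partial\Omega$ (cf.\ \cite{costabel}), by means of the representations \eqref{ch4relSL}--\eqref{ch4relDL}. The preparatory observation is that, since $a\in\mathcal C^\infty(\overline\Omega)$ and $a_{\min}>0$, multiplication by $1/a$ maps $H^{s-3/2}(\partial\Omega)$ continuously into itself for $\frac12<s<\frac32$, so the transformed density $\psi/a$ lives in the same space as $\psi$. The density $\varphi\,\partial_n\ln a$ appearing in \eqref{ch4relDL} is more delicate: on a merely Lipschitz boundary $n\in L^\infty(\partial\Omega)$ only, hence $\partial_n\ln a=n\cdot\nabla\ln a$ is bounded but not smooth, and multiplication by it need not preserve $H^{s-1/2}(\partial\Omega)$ for $s-\frac12>0$. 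This is harmless here because $\varphi\in H^{s-1/2}(\partial\Omega)\subset L^2(\partial\Omega)$ gives $\varphi\,\partial_n\ln a\in L^2(\partial\Omega)\hookrightarrow H^{s-3/2}(\partial\Omega)$ (as $s-\frac32<0$), which is exactly the range in which $V_\Delta$ and its jumps are controlled.

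The two trace jumps are then immediate. Applying $\gamma^\pm$ to \eqref{ch4relSL} and using $\gamma^+V_\Delta=\gamma^-V_\Delta$ gives $\gamma^+V\psi-\gamma^-V\psi=\gamma^+V_\Delta(\psi/a)-\gamma^-V_\Delta(\psi/a)=0$. Applying $\gamma^\pm$ to \eqref{ch4relDL} and using the harmonic double-layer trace jump $\gamma^+W_\Delta\varphi-\gamma^-W_\Delta\varphi=-\varphi$ together with the continuity of $\gamma^\pm V_\Delta$ on $\partial\Omega$ yields $\gamma^+W\varphi-\gamma^-W\varphi=-\varphi$.

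The two conormal jumps require one extra ingredient: the identity $T^\pm w=a\,\partial_n^\pm w$ relating the canonical conormal derivative for $A$ to the boundary multiple of the canonical conormal derivative for $\Delta$. I would first note that for $w\in H^{1,0}(\Omega^\pm;A)$ one has $\Delta w=(Aw-\nabla a\cdot\nabla w)/a\in L^2$, so $w\in H^{1,0}(\Omega^\pm;\Delta)$ and both canonical conormals are well defined; by \eqref{VHs1Ga}--\eqref{WHs1Ga-} the potentials $V\psi$ and $W\varphi$ lie in these spaces on both sides (locally near $\partial\Omega$ in $\Omega^-$). Then, testing \eqref{ch4green1.1} for the Laplacian against $\gamma^+(av)$ and using $\langle a\,\partial_n^\pm w,\gamma^+ v\rangle=\langle \partial_n^\pm w,\gamma^+(av)\rangle$, the product rule $\nabla(av)=v\nabla a+a\nabla v$ recombines the integrand into
\begin{equation*}
av\,\Delta w+\nabla w\cdot\nabla(av)=v\,(a\Delta w+\nabla a\cdot\nabla w)+a\,\nabla w\cdot\nabla v=v\,Aw+E(w,v),
\end{equation*}
so that $\langle a\,\partial_n^\pm w,\gamma^+ v\rangle=\langle T^\pm w,\gamma^+ v\rangle$ for all $v\in H^1(\Omega)$; since $\gamma^+$ is surjective onto $H^{\frac12}(\partial\Omega)$, this gives $T^\pm w=a\,\partial_n^\pm w$ in $H^{-\frac12}(\partial\Omega)$.

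With this identity, differentiating \eqref{ch4relSL}--\eqref{ch4relDL} finishes the proof. For the single layer, $T^\pm V\psi=a\,\partial_n^\pm V_\Delta(\psi/a)$, and the classical single-layer conormal jump (in the present orientation) $\partial_n^+V_\Delta\sigma-\partial_n^-V_\Delta\sigma=\sigma$ gives $T^+V\psi-T^-V\psi=a\,(\psi/a)=\psi$. For the double layer, $T^\pm W\varphi=a\big[\partial_n^\pm W_\Delta\varphi-\partial_n^\pm V_\Delta(\varphi\,\partial_n\ln a)\big]$; the continuity of the harmonic double-layer conormal, $\partial_n^+W_\Delta\varphi=\partial_n^-W_\Delta\varphi$, annihilates the first term, while the single-layer conormal jump applied to the density $\varphi\,\partial_n\ln a$ leaves $-a\,\varphi\,\partial_n\ln a=-(\partial_n a)\varphi$, as claimed. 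I expect the main obstacle to be the rigorous justification of $T^\pm w=a\,\partial_n^\pm w$ at the level of the \emph{canonical} (rather than classical) conormal derivatives: one must check that every term is a legitimate dual pairing on $H^{\pm\frac12}(\partial\Omega)$, that passing the factor $a$ through the trace and the dual form is admissible, and that the harmonic conormal jump relations remain valid for the transformed densities over the whole range $\frac12<s<\frac32$ on a Lipschitz boundary; the underlying pointwise computations are routine, but their transfer to the weak setting is the delicate step.
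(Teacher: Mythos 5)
Your proof is correct and takes essentially the same route as the paper, which gives no separate proof of Theorem \ref{T3.1sJ} but derives it from the representations \eqref{ch4relSL}--\eqref{ch4relDL} together with the known jump relations for the harmonic layer potentials on Lipschitz domains (citing \cite{costabel} and \cite[Theorem 3.5]{mikhailovlipschitz}), exactly as you do. Your explicit verification that $T^{\pm}w = a\,\partial_n^{\pm}w$ holds for the canonical conormal derivatives, and that the transformed densities $\psi/a$ and $\varphi\,\partial_n\ln a$ lie in the right spaces, simply fills in details the paper leaves implicit.
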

Note that the second equation in \eqref{WHs1GaTj} implies that unlike for the classical harmonic potential, the conormal derivative of the parametrix-based double layer potential has a jump.
  
The continuity of operators \eqref{VHs1g}-\eqref{WHs1GaT} in Theorem \ref{T3.1s0} and the first relation in \eqref{WHs1gj} imply the following assertion.
\begin{corollary}\label{T3.3}
Let  $\partial\Omega$ be a compact Lipschitz boundary,  $\frac{1}{2}< s< \frac{3}{2}$.
The following operators are continuous.
 \begin{align}
&\mathcal V:=\gamma^+ V=\gamma^- V  : H^{s-\frac{3}{2}}(\partial\Omega) \to H^{s-\frac{1}{2}}(\partial\Omega);\label{3.11}\\
&\mathcal W:=\frac{1}{2}(\gamma^+ W+\gamma^- W)  :  H^{s-\frac{1}{2}}(\partial\Omega)\to H^{s-\frac{1}{2}}(\partial\Omega);\label{3.12}\\
&\mathcal W':=\frac{1}{2}(T^+V +T^-V)  : H^{s-\frac{3}{2}}(\partial\Omega) \to H^{s-\frac{3}{2}}(\partial\Omega);\label{3.13}\\
&\mathcal L:=\frac{1}{2}(T^+W +T^-W)   :  H^{s-\frac{1}{2}}(\partial\Omega)\to  H^{s-\frac{3}{2}}(\partial\Omega).\label{3.14}
 \end{align}
\end{corollary}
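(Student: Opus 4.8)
The plan is to read off all four operators directly from the one-sided continuity statements \eqref{VHs1g}--\eqref{WHs1GaT} of Theorem \ref{T3.1s0}, invoking the first jump relation in \eqref{WHs1gj} only to legitimise the single operator $\mathcal V$.

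First I would dispose of $\mathcal V$. By \eqref{VHs1g}, both $\gamma^+V$ and $\gamma^-V$ map $H^{s-\frac{3}{2}}(\partial\Omega)$ continuously into $H^{s-\frac{1}{2}}(\partial\Omega)$ for $\tfrac12<s<\tfrac32$. The first relation in \eqref{WHs1gj}, namely $\gamma^+V\psi-\gamma^-V\psi=0$ for every $\psi\in H^{s-\frac{3}{2}}(\partial\Omega)$, shows that these two operators coincide on the whole space; hence the common value $\mathcal V:=\gamma^+V=\gamma^-V$ is a well-defined operator inheriting the continuity of either trace, which is exactly \eqref{3.11}.

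For the remaining three operators the argument is purely algebraic: each is a half-sum of the interior and exterior versions of an operator whose one-sided forms are already continuous between the indicated spaces. Concretely, \eqref{WHs1g} gives continuity of $\gamma^\pm W:H^{s-\frac{1}{2}}(\partial\Omega)\to H^{s-\frac{1}{2}}(\partial\Omega)$, \eqref{VHs1GaT} gives continuity of $T^\pm V:H^{s-\frac{3}{2}}(\partial\Omega)\to H^{s-\frac{3}{2}}(\partial\Omega)$, and \eqref{WHs1GaT} gives continuity of $T^\pm W:H^{s-\frac{1}{2}}(\partial\Omega)\to H^{s-\frac{3}{2}}(\partial\Omega)$. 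Since in each case the $+$ and $-$ versions share a common domain and codomain, the half-sums $\mathcal W=\tfrac12(\gamma^+W+\gamma^-W)$, $\mathcal W'=\tfrac12(T^+V+T^-V)$ and $\mathcal L=\tfrac12(T^+W+T^-W)$ are finite linear combinations of continuous linear operators between the same pair of Banach spaces, hence continuous, with operator norm bounded by the half-sum of the two one-sided norms. This yields \eqref{3.12}--\eqref{3.14}.

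There is no genuine obstacle here; the assertion is an immediate corollary once Theorem \ref{T3.1s0} and Theorem \ref{T3.1sJ} are in hand. The only point requiring a word of care is that defining $\mathcal V$ as a \emph{single} operator is legitimate, which is precisely what the vanishing single-layer trace jump in \eqref{WHs1gj} guarantees; for $\mathcal W$, $\mathcal W'$ and $\mathcal L$ no jump information is needed, only the matching of the domains and codomains of the two one-sided operators being averaged.
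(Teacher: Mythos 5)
Your proposal is correct and follows exactly the route the paper indicates: the corollary is stated there as an immediate consequence of the continuity of operators \eqref{VHs1g}--\eqref{WHs1GaT} in Theorem \ref{T3.1s0} together with the first jump relation in \eqref{WHs1gj}, which is precisely your argument (the jump relation legitimising the single operator $\mathcal V$, and half-sums of continuous operators between matching spaces being continuous for $\mathcal W$, $\mathcal W'$, $\mathcal L$). No gap; your write-up simply makes explicit what the paper leaves as a one-line justification.
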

When the boundary and the density $\rho$ are smooth enough, the boundary operators defined in Corollary~\ref{T3.3}  correspond to the  boundary integral (pseudodifferential) operators
of direct surface values of the single layer potential, the double layer potential ${\mathcal W}$, and the co-normal derivatives of the single layer potential ${{\mathcal W}\,^\prime}$ and of the double layer potential, and to the hyper-singular operator, cf. \cite[Eq. (3.6)-(3.8)]{CMN-1} for the parametrix-based potentials on smooth domains, particularly, 
\begin{align}
\mathcal{V}\rho(y)&:=-\int_{\partial\Omega} P(x,y)\rho(x)\hspace{0.25em}dS(x),\nonumber \\
\mathcal{W}\rho(y)&:=-\int_{\partial\Omega} T_{x}P(x,y)\rho(x)\hspace{0.25em}dS(x),\nonumber\\
\mathcal{W'}\rho(y)&:=-\int_{\partial\Omega} T_{y}P(x,y)\rho(x)\hspace{0.25em}dS(x),\nonumber 
\end{align} 
for $y\in \partial\Omega$.
See also \cite[Theorems 7.3, 7.4]{McLean2000} about integral representations on Lipschitz domains of the boundary operators associated with the layer potentials, based on fundamental solutions.

Employing definitions \eqref{3.11}-\eqref{3.14}, the jump properties \eqref{WHs1gj}-\eqref{WHs1GaTj} can be re-written as follows. 
\begin{theorem}\label{ch4thjumps}
For $\psi\in H^{s-\frac{3}{2}}(\partial\Omega)$, and  $\varphi\in H^{s-\frac{1}{2}}(\partial\Omega)$,  $\frac{1}{2}< s< \frac{3}{2}$, 
\begin{align}
&\gamma^\pm V\psi= {\mathcal V}\psi, \hspace{5em}
\gamma^\pm W\varphi= \mp \frac{1}{2}\,\varphi  + {\mathcal W}\varphi;\label{3.8}
\\
&T^\pm V\psi= \pm\frac{1}{2}\,\psi +
{{\mathcal W}\,^\prime}\psi,\
T^\pm W\varphi= \mp \frac{1}{2}\,(\partial_{n} a)\varphi  + {\mathcal L}\varphi.\label{3.11a}
\end{align}
\end{theorem}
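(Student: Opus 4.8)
The plan is to obtain each one-sided boundary value by combining the corresponding average operator, defined in Corollary~\ref{T3.3}, with the jump relation already established in Theorem~\ref{T3.1sJ}. The underlying observation is purely algebraic: whenever a pair of one-sided limits $X^\pm$ has a known symmetric part $M := \frac{1}{2}(X^+ + X^-)$ and a known jump $J := X^+ - X^-$, the individual limits are recovered by inverting this $2\times 2$ relation as $X^\pm = M \pm \frac{1}{2}J$. I would apply this recipe to each of the four boundary operators in turn.

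First I would treat the single layer trace. Here the jump vanishes by the first relation in \eqref{WHs1gj}, so $\gamma^+ V\psi = \gamma^- V\psi$, and by the definition \eqref{3.11} both sides coincide with $\mathcal V\psi$; this is the first identity in \eqref{3.8}. Next, for the double layer trace I would use the average $\mathcal W = \frac{1}{2}(\gamma^+ W + \gamma^- W)$ from \eqref{3.12} together with the jump $\gamma^+ W\varphi - \gamma^- W\varphi = -\varphi$ from \eqref{WHs1gj}; inverting gives $\gamma^\pm W\varphi = \mp\frac{1}{2}\varphi + \mathcal W\varphi$, the second identity in \eqref{3.8}. The conormal derivative of the single layer is handled identically, now with the average $\mathcal W' = \frac{1}{2}(T^+ V + T^- V)$ from \eqref{3.13} and the jump $T^+ V\psi - T^- V\psi = \psi$ from \eqref{WHs1GaTj}, producing $T^\pm V\psi = \pm\frac{1}{2}\psi + \mathcal W'\psi$. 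Finally, for the conormal derivative of the double layer I would combine $\mathcal L = \frac{1}{2}(T^+ W + T^- W)$ from \eqref{3.14} with the jump $T^+ W\varphi - T^- W\varphi = -(\partial_{n} a)\varphi$ from \eqref{WHs1GaTj}, which yields $T^\pm W\varphi = \mp\frac{1}{2}(\partial_{n} a)\varphi + \mathcal L\varphi$, that is, \eqref{3.11a}.

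No genuine obstacle arises at this stage. All of the analytic substance—the continuity of the four averaged boundary operators and the precise form of the jump relations, including the anomalous coefficient $\partial_{n} a$ appearing in the double-layer conormal jump that distinguishes the variable-coefficient parametrix from the classical harmonic case—has already been secured in Corollary~\ref{T3.3} and Theorem~\ref{T3.1sJ}. What remains is only to invert the elementary average-and-jump relation, so the argument is a short bookkeeping exercise, the sole points of care being the signs and the factor $\frac{1}{2}$ in each of the four identities.
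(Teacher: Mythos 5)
Your proposal is correct and is precisely the paper's own route: the paper introduces Theorem~\ref{ch4thjumps} with the remark that it is obtained by ``employing definitions \eqref{3.11}--\eqref{3.14}'' together with the jump properties \eqref{WHs1gj}--\eqref{WHs1GaTj}, i.e.\ exactly your average-plus-half-jump inversion $X^\pm = M \pm \frac{1}{2}J$ applied to each of the four boundary operators. Your write-up merely makes explicit the elementary algebra the paper leaves implicit, with all signs and factors of $\frac{1}{2}$ handled correctly.
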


By Corollary~\ref{T3.3} and relations \eqref{ch4relSL}-\eqref{ch4relDL}, the operators $\mathcal{V}, \mathcal{W}, \mathcal{W'}$ and $\mathcal{L}$ can be expressed in terms their counterparts (provided with the subscript $\Delta$) associated with the Laplace operator,
\begin{align}
\mathcal{V}\rho &= \mathcal{V}_{\Delta} \left( \dfrac{\rho}{a}\right),\label{ch4relDVSL}\\
\mathcal{W}\rho &= \mathcal{W}_{\Delta}\rho -\mathcal{V}_{\Delta}\left(\rho\frac{\partial \ln a}{\partial n}\right),\label{ch4relDVDL}\\
\mathcal{W}'\rho &= a \mathcal{W'}_{\Delta}\left(\dfrac{\rho}{a}\right),\label{ch4relTSL} \\
\mathcal{L}\rho &= a\mathcal{L}_{\Delta}\rho - a\mathcal{W'}_{\Delta}\left(\rho\frac{\partial \ln a}{\partial n}\right).
\label{ch4relTDL}
\end{align}
 Furthermore, by the Liapunov-Tauber theorem (cf. \cite[Lemma 4.1]{costabel} for the Lipschitz domains), $\mathcal{L}_{\Delta}\rho=T_\Delta^+W_\Delta\rho = T_\Delta^-W_\Delta\rho.$

\begin{theorem}\label{ch4thinvV} Let $S_{1}$ be a non-empty simply connected subset of the Lipschitz surface $\partial\Omega$ with a Lipschitz boundary curve and condition \eqref{a-cond} holds. Then, the operators
\begin{align}
\mathcal{V}&: H^{-\frac{1}{2}}(\partial\Omega) \to H^{\frac{1}{2}}(\partial\Omega),
\label{VdOm}\\
r_{S_{1}}\mathcal{V}&: \widetilde{H}^{-\frac{1}{2}}(S_{1}) \to H^{\frac{1}{2}}(S_{1})
\label{VS1}\end{align}
are continuously invertible.
\end{theorem}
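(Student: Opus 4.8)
The plan is to reduce the invertibility of $\mathcal{V}$ to that of the harmonic single layer operator $\mathcal{V}_\Delta$ by means of the factorisation \eqref{ch4relDVSL}, namely $\mathcal{V}\rho=\mathcal{V}_\Delta(\rho/a)$, and to treat the two mappings \eqref{VdOm} and \eqref{VS1} in parallel. Writing $M_\beta$ for the operator of multiplication by a function $\beta$, we have $\mathcal{V}=\mathcal{V}_\Delta\,M_{1/a}$. Since $a\in\mathcal{C}^{\infty}(\overline\Omega)$ and satisfies \eqref{a-cond}, both $1/a$ and $a$ are smooth and bounded away from zero on $\partial\Omega$, so $M_{1/a}:H^{-\frac12}(\partial\Omega)\to H^{-\frac12}(\partial\Omega)$ is a continuous isomorphism with continuous inverse $M_a$. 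Moreover, multiplication by a smooth function does not enlarge supports, hence $M_{1/a}$ also maps $\widetilde{H}^{-\frac12}(S_1)$ isomorphically onto itself, again with inverse $M_a$.

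Next I would invoke the classical invertibility of the harmonic single layer operator on Lipschitz domains in $\mathbb{R}^{3}$ (cf. \cite{costabel}): the map $\mathcal{V}_\Delta:H^{-\frac12}(\partial\Omega)\to H^{\frac12}(\partial\Omega)$ is continuously invertible, because in three dimensions it is symmetric and positive definite, i.e. $\langle\mathcal{V}_\Delta\psi,\psi\rangle_{\partial\Omega}\ge c\,\|\psi\|_{H^{-\frac12}(\partial\Omega)}^{2}$ for some constant $c>0$. The invertibility of \eqref{VdOm} then follows immediately: $\mathcal{V}=\mathcal{V}_\Delta M_{1/a}$ is a composition of isomorphisms, with inverse $M_a\mathcal{V}_\Delta^{-1}$.

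For the localised operator \eqref{VS1} I would exploit the same coercivity. Restricting the continuous, coercive bilinear form $(\psi,\varphi)\mapsto\langle\mathcal{V}_\Delta\psi,\varphi\rangle_{\partial\Omega}$ to the closed subspace $\widetilde{H}^{-\frac12}(S_1)\subset H^{-\frac12}(\partial\Omega)$ preserves coercivity, and since $\bigl(\widetilde{H}^{-\frac12}(S_1)\bigr)'=H^{\frac12}(S_1)$, the Lax--Milgram lemma yields that $r_{S_1}\mathcal{V}_\Delta:\widetilde{H}^{-\frac12}(S_1)\to H^{\frac12}(S_1)$ is continuously invertible; the operator associated with the form is indeed $r_{S_1}\mathcal{V}_\Delta$, since the test functions are supported in $\overline{S_1}$. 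Composing with the isomorphism $M_{1/a}$ of $\widetilde{H}^{-\frac12}(S_1)$ gives the invertibility of $r_{S_1}\mathcal{V}=r_{S_1}\mathcal{V}_\Delta M_{1/a}$.

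The main obstacle is the localised statement: one must verify that the positive definiteness of $\mathcal{V}_\Delta$ transfers correctly to the subspace $\widetilde{H}^{-\frac12}(S_1)$ and that its dual is identified with $H^{\frac12}(S_1)$, so that the Lax--Milgram argument delivers precisely the range space $H^{\frac12}(S_1)$. The full-boundary invertibility of $\mathcal{V}_\Delta$ in $\mathbb{R}^{3}$ is standard, so the only genuinely delicate point is this coercivity-and-duality bookkeeping on the proper subset $S_1$, together with the care needed to confirm that multiplication by $1/a$ respects the support constraint defining $\widetilde{H}^{-\frac12}(S_1)$.
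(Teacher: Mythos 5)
Your proposal is correct and follows essentially the same route as the paper: both reduce $\mathcal{V}$ to $\mathcal{V}_\Delta$ via the relation $\mathcal{V}\rho=\mathcal{V}_\Delta(\rho/a)$, and both obtain invertibility of $\mathcal{V}_\Delta$ on $H^{-\frac{1}{2}}(\partial\Omega)$ and of $r_{S_1}\mathcal{V}_\Delta$ on $\widetilde{H}^{-\frac{1}{2}}(S_1)$ from the coercivity estimate combined with the Lax--Milgram lemma. Your treatment is merely more explicit about the multiplication operator $M_{1/a}$ being a support-preserving isomorphism and about the duality $\bigl(\widetilde{H}^{-\frac{1}{2}}(S_1)\bigr)'=H^{\frac{1}{2}}(S_1)$, points the paper leaves implicit.
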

\begin{proof}
We first remark that 
\begin{align}\label{elest1}
\langle \mathcal{V}_\Delta\psi,\psi\rangle_{\partial\Omega}\ge c\|\psi\|_{H^{-1/2}(\partial\Omega)},\quad\forall\psi\in H^{-1/2}(\partial\Omega),
\end{align}
see e.g. \cite[Corollary 8.13]{McLean2000}. 
This evidently gives also
\begin{align}\label{elest2}
\langle \mathcal{V}_\Delta\psi,\psi\rangle_{\partial\Omega}\ge c\|\psi\|_{\widetilde H^{-1/2}(S_1)},\quad\forall\psi\in \widetilde H^{-1/2}(S_1).
\end{align}
By the Lax-Milgram lemma, ellipticity estimates \eqref{elest1},  \eqref{elest2} and the continuity of operators 
$\mathcal{V}_\Delta: H^{-\frac{1}{2}}(\partial\Omega) \rightarrow H^{\frac{1}{2}}(\partial\Omega)$
and
$\mathcal{V}_\Delta: \widetilde H^{-1/2}(S_1) \rightarrow H^{\frac{1}{2}}(S_1)$ imply that
these operators are continuously invertible.
Relation \eqref{ch4relSL} gives $\mathcal{V}g = \mathcal{V}_{\Delta}g^{*}$, where $g^* = g/a$, which leads to 
the invertibility of operators \eqref{VdOm} and \eqref{VS1}.
\end{proof}
 
Let us denote
\begin{align}
\widehat{\mathcal{L}}\rho &:= a\mathcal{L}_{\Delta}\rho.\label{ch4hatL}
\end{align}
Then by \eqref{ch4relTDL} and \eqref{3.11a}, we have,
\begin{align}
T^\pm W\rho= \mp \frac{1}{2}\,(\partial_{n} a)\rho  + \widehat{\mathcal{L}}\rho - a\mathcal{W'}_{\Delta}\left(\rho\frac{\partial \ln a}{\partial n}\right).
\label{3.11aa}
\end{align}
\begin{theorem}\label{ch4thinvL} Let $S_{1}$ be a non-empty simply connected subset of the Lipschitz surface $\partial\Omega$ with a Lipschitz boundary curve and condition \eqref{a-cond} holds. 
Then, the operator
\begin{align}
\label{hatL}
r_{S_{1}}\widehat{\mathcal{L}}: \widetilde{H}^{\frac{1}{2}}(S_{1}) \to H^{-\frac{1}{2}}(S_{1}),
\end{align}
is invertible whilst the operators
\begin{align}
r_{S_{1}}(T^\pm W-\widehat{\mathcal{L}}): \widetilde{H}^{\frac{1}{2}}(S_{1}) \to H^{-\frac{1}{2}}(S_{1})
\label{hatL-}
\end{align}
are compact.
\end{theorem}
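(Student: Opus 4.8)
The plan for the invertibility \eqref{hatL} is to mimic the proof of Theorem~\ref{ch4thinvV}, with the single-layer operator replaced by the Laplace hypersingular operator. By definition \eqref{ch4hatL} and since $a$ acts by pointwise multiplication, $r_{S_1}\widehat{\mathcal L}\rho=(r_{S_1}a)\,r_{S_1}\mathcal L_\Delta\rho$. As $a\in\mathcal C^\infty(\overline\Omega)$ satisfies \eqref{a-cond}, its trace on $\partial\Omega$ is Lipschitz and bounded below by $a_{\rm min}>0$, so multiplication by $a$ is an isomorphism of $H^{-1/2}(S_1)$ with inverse multiplication by $1/a$. Hence it suffices to show that $r_{S_1}\mathcal L_\Delta:\widetilde H^{1/2}(S_1)\to H^{-1/2}(S_1)$ is continuously invertible, after which composition with multiplication by $a$ yields \eqref{hatL}.

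For the invertibility of $r_{S_1}\mathcal L_\Delta$ I would use the ellipticity estimate $\langle\mathcal L_\Delta v,v\rangle_{\partial\Omega}\ge c\,\|v\|_{\widetilde H^{1/2}(S_1)}^2$ for all $v\in\widetilde H^{1/2}(S_1)$, the hypersingular counterpart of \eqref{elest1}--\eqref{elest2}; it follows from representing the associated bilinear form as the Dirichlet energy $\int_{\mathbb R^3\setminus\partial\Omega}|\nabla W_\Delta v|^2\,dx$ of the harmonic double-layer potential, which forces $v$ to be constant when the energy vanishes. The point to verify is that this form, only positive \emph{semi}definite on $H^{1/2}(\partial\Omega)$ with the constants in its kernel, is strictly coercive on $\widetilde H^{1/2}(S_1)$: a nonzero constant on $S_1$ extended by zero to $\partial\Omega$ has a jump across the Lipschitz curve $\partial S_1$ and hence does not belong to $H^{1/2}(\partial\Omega)$, so the only constant in $\widetilde H^{1/2}(S_1)$ is $0$ — this is exactly where the hypothesis that $S_1$ is a proper (simply connected) subset of $\partial\Omega$ enters. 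Together with the continuity of $r_{S_1}\mathcal L_\Delta$ (the $a\equiv1$ case of Corollary~\ref{T3.3} with $s=1$), the Lax--Milgram lemma gives the invertibility, exactly as in Theorem~\ref{ch4thinvV}.

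For the compactness of \eqref{hatL-} I would start from \eqref{3.11aa}, which gives
\[
r_{S_1}(T^\pm W-\widehat{\mathcal L})\rho
=\mp\tfrac12\, r_{S_1}\big[(\partial_n a)\,\rho\big]
- r_{S_1}\Big[a\,\mathcal{W}'_\Delta\big(\rho\,\tfrac{\partial\ln a}{\partial n}\big)\Big],
\]
and treat the two terms separately. Since $\partial\Omega$ is only Lipschitz, its unit normal is merely in $L^\infty(\partial\Omega)$, so $\partial_n a$ and $\tfrac{\partial\ln a}{\partial n}$ lie in $L^\infty(\partial\Omega)$; accordingly I would work at the $L^2$ level. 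In the first term the continuous embedding $\widetilde H^{1/2}(S_1)\hookrightarrow L^2(\partial\Omega)$, followed by multiplication by the bounded function $\partial_n a$ and restriction, sends $\rho$ into $L^2(S_1)$, and the Rellich embedding $L^2(S_1)\hookrightarrow H^{-1/2}(S_1)$ is compact because $S_1$ is bounded. For the second term, multiplication by $\tfrac{\partial\ln a}{\partial n}\in L^\infty$ maps $\widetilde H^{1/2}(S_1)$ boundedly into $L^2(\partial\Omega)=H^0(\partial\Omega)$, which embeds continuously into $H^{t}(\partial\Omega)$ for any $t<0$; choosing $t\in(-\tfrac12,0)$ and using the boundedness of $\mathcal{W}'_\Delta$ on $H^{t}(\partial\Omega)$ (the $a\equiv1$ case of \eqref{3.13}, valid for $t\in(-1,0)$), then multiplication by the Lipschitz function $a$ and restriction to $S_1$, produces an element of $H^{t}(S_1)$, and the compact Rellich embedding $H^{t}(S_1)\hookrightarrow H^{-1/2}(S_1)$ (valid since $t>-\tfrac12$) finishes the argument.

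The main obstacle is the strict coercivity of $\mathcal L_\Delta$ on $\widetilde H^{1/2}(S_1)$: one must fix the correct sign (inherited from the same normalisation that makes $\mathcal V_\Delta$ positive in \eqref{elest1}) and, crucially, rule out the constant near-kernel, which is where the assumption $S_1\subsetneq\partial\Omega$ is essential. By comparison the compactness is routine, the two remainder terms being of lower order with compactness produced entirely by the Rellich embedding after restriction to the bounded set $S_1$; the only care needed there is the $L^\infty$-regularity of the normal on a Lipschitz boundary, which forces the argument through $L^2$ and slightly negative Sobolev orders.
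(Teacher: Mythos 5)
Your proposal is correct, and for the invertibility of \eqref{hatL} it is essentially the paper's own argument: the paper likewise reduces via \eqref{ch4hatL} to the invertibility of $r_{S_1}\mathcal{L}_\Delta:\widetilde H^{\frac{1}{2}}(S_1)\to H^{-\frac{1}{2}}(S_1)$, except that it simply cites the coercivity estimate for the harmonic hypersingular operator (Eq.\ (6.39) of \cite{steinbach}) together with the Lax--Milgram lemma, whereas you inline a sketch of that estimate's proof. One caution there: knowing that the Dirichlet-energy form vanishes only at $v=0$ on $\widetilde H^{\frac{1}{2}}(S_1)$ gives injectivity, not yet a uniform lower bound; the actual coercivity needs the equivalence of the energy with the quotient norm $\|v\|_{H^{1/2}(\partial\Omega)/\mathbb{R}}$ plus the fact that this quotient norm is equivalent to the full norm on the closed subspace $\widetilde H^{\frac{1}{2}}(S_1)$ (which meets the constants trivially) --- precisely what the Steinbach citation packages. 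Where you genuinely diverge is the compactness of \eqref{hatL-}. The paper argues in one line: by \eqref{3.11aa} and the continuity of \eqref{3.13}, the difference operator is continuous already on the larger domain space $\widetilde H^{-\frac{1}{2}}(S_1)\to H^{-\frac{1}{2}}(S_1)$, and compactness then follows from the compact embedding $\widetilde H^{\frac{1}{2}}(S_1)\hookrightarrow \widetilde H^{-\frac{1}{2}}(S_1)$. You instead keep the domain $\widetilde H^{\frac{1}{2}}(S_1)$ fixed and route the two terms of \eqref{3.11aa} through $L^2(\partial\Omega)$ and $H^{t}(\partial\Omega)$, $t\in(-\tfrac12,0)$, extracting compactness from the Rellich embedding into the target $H^{-\frac{1}{2}}(S_1)$. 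This buys something real: the paper's continuity claim on $\widetilde H^{-\frac{1}{2}}(S_1)$ implicitly requires multiplication by $\partial_n a$ and $\partial_n\ln a$ to be bounded on $H^{-\frac{1}{2}}(\partial\Omega)$, i.e.\ that these functions be multipliers on $H^{\frac{1}{2}}(\partial\Omega)$; on a merely Lipschitz boundary they are only $L^\infty$, for which this fails in general, whereas your detour uses nothing beyond the $L^\infty$ bound. The price is a longer chain of embeddings, and you should justify the compactness of $H^{t}(S_1)\hookrightarrow H^{-\frac{1}{2}}(S_1)$ by choosing bounded extensions to the compact surface $\partial\Omega$, applying Rellich there, and restricting; the paper's shorter route works verbatim when $\partial\Omega$ and $a$ are smooth enough for $\partial_n a$ to be a Sobolev multiplier.
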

\begin{proof}
%
Taking into account the invertibility of the operator 
$r_{S_{1}}\mathcal{L}_\Delta: \widetilde{H}^{\frac{1}{2}}(S_{1}) \to H^{-\frac{1}{2}}(S_{1})$ 
(see e.g.  \cite[Eq. (6.39)]{steinbach} together with the Lax-Milgram lemma), \eqref{ch4hatL} implies the invertibility of operator \eqref{hatL}. 

Now we remark that by \eqref{3.11aa} and the continuity of operator \eqref{3.13}, the operator 
\[ 
r_{S_{1}}(T^\pm W-\widehat{\mathcal{L}}): \widetilde{H}^{-\frac{1}{2}}(S_{1}) \to H^{-\frac{1}{2}}(S_{1})  
\]
is continuous.
Then, the Rellich compact embedding theorem implies the compactness of operators \eqref{hatL-}. 
\end{proof}

\section{Third Green identities and integral relations}

In this section we provide the results similar to the ones in \cite{mikhailovlipschitz} but for our, different, parametrix \eqref{ch4Px}.  
  
Let  $u,v\in H^{1,0}(\Omega;A)$.   Subtracting from the first Green identity \eqref{ch4green1.1} its counterpart with the swapped $u$ and $v$, we arrive at the second Green identity, see e.g. \cite{McLean2000},
\begin{equation}\label{ch4green2}
\displaystyle\int_{\Omega}\left[u\,Av - v\,Au\right]dx
= \langle u, T^{+}v \rangle_{\partial\Omega}-\langle v, T^{+}u\rangle_{\partial\Omega}. 
\end{equation}
Taking now $v(x):=P(x,y)$, and applying \eqref{ch4green2} to the domain $\Omega$ without a small vicinity of $y$, we obtain by the standard limiting procedures (cf. \cite{miranda}) the third Green identity  for any function $u\in H^{1,0}(\Omega;A)$,
\begin{equation}\label{ch4green3}
u+\mathcal{R}u-VT^{+}u+W\gamma^{+}u=\mathcal{P}Au\hspace{1em}\text{in}\hspace{0.2em}\Omega.
\end{equation}

If $u\in H^{1,0}(\Omega; A)$ is a solution of the partial differential equation \eqref{ch4BVP1}, then, from \eqref{ch4green3} we obtain
\begin{equation}\label{ch43GV}
u+\mathcal{R}u-VT^{+}u+W\gamma^{+}u=\mathcal{P}f\hspace{0.5em}\text{in}\hspace{0.2em}\Omega.
\end{equation}
Taking into account the mapping and jump properties of the potentials from Theorems \ref{ch4thmUR}, \ref{T3.1s0} and \ref{ch4thjumps}, we can calculate the traces 
of the both sides of \eqref{ch43GV},
\begin{align}\label{ch43GG}
&\dfrac{1}{2}\gamma^{+}u+\gamma^{+}\mathcal{R}u-\mathcal{V}T^{+}u+\mathcal{W}\gamma^{+}u=\gamma^{+}\mathcal{P}f
\hspace{0.5em}\text{on}\hspace{0.2em}\partial\Omega.
\end{align}
 
For some function $u$ and distributions $f$, $\Psi$ and $\Phi$, we consider a more general, indirect integral relation associated with the third Green identity \eqref{ch43GV},
\begin{equation}\label{ch4G3ind}
u+\mathcal{R}u-V\Psi+W\Phi=\mathcal{P}f\hspace{0.5em} \text{in}\,\,\,\Omega.
\end{equation}
\begin{lemma}\label{ch4lema1}Let $u\in H^{1}(\Omega)$, $f\in L_{2}(\Omega)$, $\Psi\in H^{-\frac{1}{2}}(\partial\Omega)$ and $\Phi\in H^{\frac{1}{2}}(\partial\Omega)$ satisfy the relation \eqref{ch4G3ind}. Then $u$ belongs to $H^{1,0}(\Omega, A)$, it solves the equation 
\begin{equation}\label{ch4lema1.5}
Au=f\hspace{0.5em}{\rm in }\hspace{0.5em}\Omega
\end{equation} 
and the following identity holds true,
\begin{equation}\label{ch4lema1.0}
V(\Psi- T^{+}u) - W(\Phi- \gamma^{+}u) = 0\hspace{0.75em}{\rm in }\hspace{0.5em}\Omega.
\end{equation}
\end{lemma}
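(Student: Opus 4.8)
The plan is to proceed in three stages: first show $u \in H^{1,0}(\Omega;A)$ with $Au=f$, then derive the identity \eqref{ch4lema1.0}. The key idea is that the relation \eqref{ch4G3ind} expresses $u$ as a combination of potentials whose mapping properties are already catalogued, so I can apply the operator $A$ to both sides and exploit the defining property \eqref{parametrixdef} of the parametrix.

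First I would establish regularity. By Theorem~\ref{ch4thmUR}, the remainder potential $\mathcal{R}u \in H^{1,0}(\Omega;A)$ and the Newton potential $\mathcal{P}f \in H^{2,0}(\Omega;A) \subset H^{1,0}(\Omega;A)$ since $f\in L_2(\Omega)$. By the continuity of the single and double layer operators into $H^{1,0}(\Omega;A)$ asserted in \eqref{VHs1Ga} and \eqref{WHs1Ga} of Theorem~\ref{T3.1s0} (valid for $\Psi\in H^{-1/2}(\partial\Omega)$ and $\Phi\in H^{1/2}(\partial\Omega)$), we have $V\Psi, W\Phi \in H^{1,0}(\Omega;A)$. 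Rearranging \eqref{ch4G3ind} to write $u = \mathcal{P}f - \mathcal{R}u + V\Psi - W\Phi$ then shows $u$ is a sum of members of $H^{1,0}(\Omega;A)$, hence $u\in H^{1,0}(\Omega;A)$.

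Next I would apply $A$ to \eqref{ch4G3ind}. Since the volume potential satisfies $A\mathcal{P}f = f$ and $A\mathcal{R}u = $ the appropriate remainder action (these follow from the parametrix relation \eqref{parametrixdef}: integrating $A_x P(x,y)=\delta(x-y)+R(x,y)$ against densities yields $A\mathcal{P}\rho = \rho + \mathcal{R}\rho$), while the layer potentials $V\Psi$ and $W\Phi$ satisfy $AV\Psi = 0$ and $AW\Phi = 0$ in $\Omega$ away from the boundary because $P(x,y)$ is a parametrix (so $A_y$ applied to the single/double layer kernels produces only the remainder contributions, which are accounted for). Applying $A$ to the whole relation \eqref{ch4G3ind} and using $A\mathcal{P}(\cdot) = (\cdot) + \mathcal{R}(\cdot)$ collapses the terms: the $\mathcal{R}u$ term cancels against the remainder generated by $Au$, and one is left with $Au = f$ in $\Omega$, establishing \eqref{ch4lema1.5}.

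Finally, having shown $u\in H^{1,0}(\Omega;A)$ solves $Au=f$, I invoke the third Green identity \eqref{ch43GV}, which holds for exactly such $u$ and reads $u+\mathcal{R}u - VT^{+}u + W\gamma^{+}u = \mathcal{P}f$. Subtracting this from the hypothesis \eqref{ch4G3ind} eliminates $u$, $\mathcal{R}u$, and $\mathcal{P}f$, leaving precisely $V(\Psi - T^{+}u) - W(\Phi - \gamma^{+}u) = 0$, which is \eqref{ch4lema1.0}. \textbf{The main obstacle} I anticipate is justifying the application of $A$ to the layer potentials and the bookkeeping of remainder terms rigorously at the level of $H^{-1/2}$ and $L_2$ distributions rather than smooth functions; the cleanest route avoids computing $AV\Psi$ and $AW\Phi$ directly and instead relies entirely on the already-proven third Green identity \eqref{ch43GV} for the subtraction step, so that the only genuine work is the regularity argument and verifying $Au=f$ via the volume-potential identity $A\mathcal{P} = I + \mathcal{R}$.
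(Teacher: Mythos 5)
Your first step (the mapping properties of $\mathcal{P},\mathcal{R},V,W$ imply $u\in H^{1,0}(\Omega;A)$) and your final subtraction step are correct and match the paper. The genuine gap is in the middle step, where you establish $Au=f$. The identities you invoke there --- $AV\Psi=0$, $AW\Phi=0$ in $\Omega$, and $A\mathcal{P}\rho=\rho+\mathcal{R}\rho$ --- are all false for a parametrix; they would hold only if $P$ were a fundamental solution of $A$. The parametrix relation \eqref{parametrixdef} involves $A_x$, i.e.\ differentiation in the \emph{integration} variable, whereas applying $A$ to the potentials means differentiating in the field point $y$; since $P(x,y)=P_\Delta(x-y)/a(x)$ is not symmetric in $x$ and $y$, these are not interchangeable. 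Concretely, writing $A=a\Delta+\nabla a\cdot\nabla$ and using \eqref{ch4relP}, \eqref{ch4relSL}, one gets in $\Omega$
\begin{equation*}
AV\Psi=\nabla a\cdot\nabla V_\Delta\!\left(\frac{\Psi}{a}\right)\neq 0,
\qquad
A\mathcal{P}\rho=\rho+\nabla a\cdot\nabla \mathcal{P}_\Delta\!\left(\frac{\rho}{a}\right),
\end{equation*}
and the second ``remainder'' term is \emph{not} $\mathcal{R}\rho$ (compare \eqref{ch4relR}: $\mathcal{R}$ involves derivatives of $a$ at the integration point, not at $y$). Likewise, your claimed cancellation of $\mathcal{R}u$ against ``the remainder generated by $Au$'' conflates $A\mathcal{P}$ with $\mathcal{P}A$: the relation $\mathcal{P}Au=u+\mathcal{R}u-VT^{+}u+W\gamma^{+}u$ \emph{is} the third Green identity \eqref{ch4green3}, not an operator identity $A\mathcal{P}=I+\mathcal{R}$. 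Your fallback plan does not close this gap either: subtracting \eqref{ch4green3} from \eqref{ch4G3ind} yields
\begin{equation*}
W(\gamma^{+}u-\Phi)-V(T^{+}u-\Psi)=\mathcal{P}(Au-f)\quad\text{in }\Omega,
\end{equation*}
so to obtain \eqref{ch4lema1.0} you still must prove that the right-hand side vanishes, i.e.\ $Au=f$, which is precisely the step resting on the false identity.

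The repair, which is the paper's route, is to reverse your order and change the operator: do the subtraction \emph{first} (legitimate, since the regularity step already gives $u\in H^{1,0}(\Omega;A)$ and hence \eqref{ch4green3} holds for $u$), and then apply the \emph{Laplace} operator $\Delta$, not $A$, to the subtracted identity displayed above. By \eqref{ch4relSL} and \eqref{ch4relDL}, its left-hand side is a combination of harmonic layer potentials $V_\Delta$, $W_\Delta$, hence annihilated by $\Delta$ in $\Omega$, while by \eqref{ch4relP},
\begin{equation*}
\Delta\mathcal{P}(Au-f)=\Delta\mathcal{P}_\Delta\!\left(\frac{Au-f}{a}\right)=\frac{Au-f}{a}\quad\text{in }\Omega .
\end{equation*}
This forces $Au=f$, and substituting this back into the subtracted identity gives \eqref{ch4lema1.0}. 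So your overall architecture survives, but the tool for proving $Au=f$ must be $\Delta$ together with the relations \eqref{ch4relP}, \eqref{ch4relSL}, \eqref{ch4relDL} to the harmonic-kernel potentials, applied after the subtraction, not a direct application of $A$ to \eqref{ch4G3ind}.
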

\begin{proof}
Since all the potentials in \eqref{ch4G3ind} belong to $H^{1,0}(\Omega;A)$ due to the continuity of operators \eqref{ch4mpvp3}, \eqref{ch4mpvp4},  \eqref{VHs1Ga} and \eqref{WHs1Ga} in Theorems \ref{ch4thmUR} and \ref{T3.1s0}, equation \eqref{ch4G3ind} implies that $u\in H^{1,0}(\Omega;A)$ as well. 

Hence, the third Green identity \eqref{ch4green3} is valid for the function $u$, 
and
we proceed subtracting \eqref{ch4green3} from \eqref{ch4G3ind} to obtain
\begin{equation}\label{ch4lema1.3}
W(\gamma^{+}u-\Phi)-V(T^{+}u-\Psi)=\mathcal{P}(Au-f)\hspace{0.5em}\text{in}\hspace{0.5em}\Omega.
\end{equation}
Let us apply the Laplace operator to both sides of equation \eqref{ch4lema1.3} taking into account relations \eqref{ch4relP}, \eqref{ch4relSL} and \eqref{ch4relDL}. Then, we obtain $Au-f$ in $\Omega$,  
i.e., $u$ solves \eqref{ch4lema1.5}.
Finally, substituting \eqref{ch4lema1.5} into \eqref{ch4lema1.3}, we prove \eqref{ch4lema1.0}.
\end{proof}
 
\begin{lemma}\label{ch4lemma2}
Let $\Psi^{*}\in H^{-\frac{1}{2}}(\partial\Omega)$. If
\begin{equation}\label{ch4lema2i}
V\Psi^{*}= 0\hspace{0.5em}{\rm in }\hspace{0.5em}\Omega,
\end{equation}
then $\Psi^{*} = 0$ on $\partial\Omega$.
\end{lemma}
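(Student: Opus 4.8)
The plan is to exploit the representation \eqref{ch4relSL}, which expresses the variable-coefficient single layer potential in terms of the classical one: $V\Psi^* = V_\Delta(\Psi^*/a)$. Setting $\Psi^\sharp := \Psi^*/a$, the hypothesis \eqref{ch4lema2i} becomes $V_\Delta \Psi^\sharp = 0$ in $\Omega$. Since by \eqref{a-cond} the coefficient $a$ is bounded away from $0$ and $\infty$ and smooth, multiplication by $1/a$ is an isomorphism of $H^{-1/2}(\partial\Omega)$, so it suffices to show that $\Psi^\sharp = 0$, i.e. to prove the injectivity of the harmonic single layer potential $V_\Delta$ on $H^{-1/2}(\partial\Omega)$ as a map into functions on $\Omega$.

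The key steps are as follows. First I would take the interior trace of $V_\Delta\Psi^\sharp = 0$ on $\partial\Omega$, which by \eqref{3.8} (with $a\equiv 1$, i.e. its classical counterpart) gives $\mathcal V_\Delta \Psi^\sharp = \gamma^+ V_\Delta\Psi^\sharp = 0$ on $\partial\Omega$. Now I would invoke the ellipticity estimate \eqref{elest1}, namely $\langle \mathcal V_\Delta\Psi^\sharp,\Psi^\sharp\rangle_{\partial\Omega}\ge c\|\Psi^\sharp\|_{H^{-1/2}(\partial\Omega)}^2$; since the left-hand side vanishes, this forces $\|\Psi^\sharp\|_{H^{-1/2}(\partial\Omega)}=0$, hence $\Psi^\sharp=0$. (Alternatively, one can argue directly from the continuous invertibility of $\mathcal V_\Delta:H^{-1/2}(\partial\Omega)\to H^{1/2}(\partial\Omega)$ established inside the proof of Theorem~\ref{ch4thinvV}, which is really the same estimate.) Finally, unwinding $\Psi^\sharp = \Psi^*/a = 0$ and using $a_{\min}>0$ yields $\Psi^* = 0$ on $\partial\Omega$, as claimed.

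The main subtlety, and the step I would treat most carefully, is the passage from $V_\Delta\Psi^\sharp=0$ \emph{throughout $\Omega$} to the boundary identity $\mathcal V_\Delta\Psi^\sharp=0$. The hypothesis only asserts vanishing in the open domain $\Omega$, so I must apply the interior trace operator $\gamma^+$ and invoke the continuity/jump relation \eqref{3.8} to transfer the information to $\partial\Omega$; this is legitimate because $V_\Delta\Psi^\sharp\in H^{1,0}(\Omega;A)$ by \eqref{VHs1Ga} (with $a\equiv 1$), so its trace is well defined and, being the trace of the zero function, equals zero. Once $\mathcal V_\Delta\Psi^\sharp=0$ is in hand, the rest is immediate from coercivity. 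I would note that only the \emph{interior} behaviour is used, so no continuation argument into $\Omega^-$ is required; the whole proof reduces to one application of the trace operator followed by the coercivity estimate \eqref{elest1}.
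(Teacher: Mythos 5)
Your proof is correct and follows essentially the same route as the paper: take the trace of $V\Psi^{*}=0$ to get the boundary identity, then conclude from the injectivity of the boundary single-layer operator. The only difference is that the paper cites the invertibility of $\mathcal{V}:H^{-\frac{1}{2}}(\partial\Omega)\to H^{\frac{1}{2}}(\partial\Omega)$ from Theorem~\ref{ch4thinvV} at that point, whereas you inline that theorem's proof --- the reduction to $\mathcal{V}_{\Delta}$ via \eqref{ch4relSL} and the coercivity estimate \eqref{elest1} --- which, as you yourself note, is the same estimate.
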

\begin{proof} Taking the trace of \eqref{ch4lema2i} gives
$\mathcal{V}\Psi^{*} = 0
$ on $\partial\Omega$,
which implies the result due to the invertibility of operator \eqref{VdOm} in Theorem  \ref{ch4thinvV}. 
\end{proof}


\section{BDIE system for the mixed problem}
We aim to obtain a segregated boundary-domain integral equation system for mixed BVP \eqref{ch4BVP}. 
To this end, let $f\in L_{2}(\Omega)$ and the functions $\Phi_{0}\in H^{\frac{1}{2}}(\partial\Omega)$ and $\Psi_{0}\in H^{-\frac{1}{2}}(\partial\Omega)$ be respective continuations of the given boundary data 
$\phi_{0}\in H^{\frac{1}{2}}(\partial\Omega_{D})$ and 
$\psi_{0}\in H^{-\frac{1}{2}}(\partial\Omega_{N})$ to the whole $\partial\Omega$,
i.e., $r_{\partial\Omega_{D}}\Phi_{0}=\phi_{0}$, $r_{\partial\Omega_{N}}\Psi_{0}=\psi_{0}$.
Let us now represent 
\begin{equation}\label{ch4gTrepr}
\gamma^{+}u=\Phi_{0} + \phi,\quad\quad 
T^{+}u = \Psi_{0} +\psi,\quad\text{ on }\,\, \partial\Omega, 
\end{equation}
where  $\phi\in\widetilde{H}^{\frac{1}{2}}(\partial\Omega_{N})$ and $\psi\in\widetilde{H}^{-\frac{1}{2}}(\partial\Omega_{D})$ are unknown boundary functions, which we will further consider as formally independent (segregated) of $u$ in $\Omega$. 

To obtain one of the possible boundary-domain integral equation systems we employ identity \eqref{ch43GV} in the domain $\Omega$, and identity \eqref{ch43GG} on $\partial\Omega$, substituting there relations \eqref{ch4gTrepr}.  Consequently, we obtain the BDIE system (M12) of two equations 
\begin{subequations}\label{ch4SM12vg}
\begin{align}
u+\mathcal{R}u-V\psi+W\phi&=F_{0}\hspace{2em}{\rm in}\hspace{0.5em}\Omega,\label{ch4SM12v}\\
\dfrac{1}{2}\phi+\gamma^{+}\mathcal{R}u-\mathcal{V}\psi+\mathcal{W}\phi&=\gamma^{+}F_{0}-\Phi_{0}\label{ch4SM12g}\hspace{2em}{\rm on}\hspace{0.5em}\partial\Omega,
\end{align}
\end{subequations}
for three unknown functions,  $u$, $\psi$ and $\phi$. Here
\begin{equation}\label{ch4F0term}
F_{0}=\mathcal{P}f+V\Psi_{0}-W\Phi_{0}.
\end{equation}
We remark that $F_{0}$ belongs to the space $H^{1}(\Omega)$ due to the mapping properties of the surface and volume potentials, see Theorems \ref{ch4thmUR} and \ref{T3.1s0}.

\begin{theorem}\label{ch4EqTh}
Let $f\in L_{2}(\Omega)$. Let $\Phi_{0}\in H^{\frac{1}{2}}(\partial\Omega)$ and $\Psi_{0}\in H^{-\frac{1}{2}}(\partial\Omega)$ be some fixed extensions of $\phi_{0}\in H^{\frac{1}{2}}(\partial\Omega_{D})$ and $\psi_{0}\in H^{-\frac{1}{2}}(\partial\Omega_{N})$ respectively. 
\begin{enumerate}
\item[$(i)$] If some $u\in H^{1}(\Omega)$ solves the BVP \eqref{ch4BVP}, then the triple $(u, \psi, \phi )^{\top}\in H^{1}(\Omega)\times\widetilde{H}^{-\frac{1}{2}}(\partial\Omega_{D})\times\widetilde{H}^{\frac{1}{2}}(\partial\Omega_{N})$ where
\begin{equation}\label{ch4eqcond}
\phi=\gamma^{+}u-\Phi_{0},\hspace{4em}\psi=T^{+}u-\Psi_{0}\hspace{2em}{\rm on}\hspace{0.5em}\partial\Omega,
\end{equation}
solves the BDIE system {\rm (M12)}. 

\item[$(ii)$] If a triple $(u, \psi, \phi )^{\top}\in H^{1}(\Omega)\times\widetilde{H}^{-\frac{1}{2}}(\partial\Omega_{D})\times\widetilde{H}^{\frac{1}{2}}(\partial\Omega_{N})$ solves the BDIE system then $u$ solves the BVP and the functions $\psi, \phi$ satisfy \eqref{ch4eqcond}.

\item[$(iii)$] System {\rm (M12)} is uniquely solvable. 
\end{enumerate}
\end{theorem}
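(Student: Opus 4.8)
The plan is to establish the two-way equivalence in parts $(i)$ and $(ii)$ by direct manipulation of the third Green identity \eqref{ch43GV} and its trace \eqref{ch43GG}, and then to deduce the unique solvability in $(iii)$ as a consequence of this equivalence together with the well-posedness of the original BVP guaranteed by Theorem~\ref{ch4Thomsol}. Throughout, the mapping and jump properties of Theorems~\ref{ch4thmUR}, \ref{T3.1s0} and \ref{ch4thjumps} are used freely to justify taking traces and to place the potentials in the right spaces.

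For $(i)$ I would start from a solution $u\in H^1(\Omega)$ of the BVP. Since $Au=f\in L_2(\Omega)$, we have $u\in H^{1,0}(\Omega;A)$, so \eqref{ch43GV} and \eqref{ch43GG} are available. Setting $\phi:=\gamma^+u-\Phi_0$ and $\psi:=T^+u-\Psi_0$ as in \eqref{ch4eqcond}, the boundary conditions \eqref{ch4BVP2}, \eqref{ch4BVP3} together with $r_{\partial\Omega_D}\Phi_0=\phi_0$, $r_{\partial\Omega_N}\Psi_0=\psi_0$ force $r_{\partial\Omega_D}\phi=0$ and $r_{\partial\Omega_N}\psi=0$, so that $\phi\in\widetilde H^{\frac12}(\partial\Omega_N)$ and $\psi\in\widetilde H^{-\frac12}(\partial\Omega_D)$, placing the triple in the required space. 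Substituting $\gamma^+u=\Phi_0+\phi$ and $T^+u=\Psi_0+\psi$ into \eqref{ch43GV}, \eqref{ch43GG} and collecting the known data into $F_0$ from \eqref{ch4F0term} then yields \eqref{ch4SM12v} and \eqref{ch4SM12g}; the only point needing care is matching the term $-\Phi_0$ on the right of \eqref{ch4SM12g}, which comes out of the jump relation $\gamma^+W\Phi_0=-\frac{1}{2}\Phi_0+\mathcal W\Phi_0$ in \eqref{3.8}.

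Part $(ii)$ is the substantive direction. Given a solution triple in the stated space, I would rewrite the domain equation \eqref{ch4SM12v} as
\begin{equation*}
u+\mathcal R u-V(\psi+\Psi_0)+W(\phi+\Phi_0)=\mathcal P f,
\end{equation*}
which is exactly the indirect relation \eqref{ch4G3ind} with $\Psi=\psi+\Psi_0$ and $\Phi=\phi+\Phi_0$. Lemma~\ref{ch4lema1} then gives at once $u\in H^{1,0}(\Omega;A)$, the equation $Au=f$ in $\Omega$, and the identity $V(\psi+\Psi_0-T^+u)-W(\phi+\Phi_0-\gamma^+u)=0$ from \eqref{ch4lema1.0}. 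To pin down the boundary relations I would take the trace of \eqref{ch4SM12v}, use \eqref{3.8} to turn $\gamma^+V\psi$ and $\gamma^+W\phi$ into $\mathcal V\psi$ and $-\frac{1}{2}\phi+\mathcal W\phi$, and subtract the resulting identity from \eqref{ch4SM12g}. All the $\mathcal R$, $\mathcal V$, $\mathcal W$ terms cancel, leaving $\gamma^+u-\phi=\Phi_0$, i.e. $\phi=\gamma^+u-\Phi_0$; the support property $\phi\in\widetilde H^{\frac12}(\partial\Omega_N)$ then restricts to $r_{\partial\Omega_D}\gamma^+u=\phi_0$, recovering \eqref{ch4BVP2}. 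Feeding $\phi+\Phi_0-\gamma^+u=0$ back into \eqref{ch4lema1.0} annihilates the double-layer term, leaving $V(\psi+\Psi_0-T^+u)=0$ in $\Omega$; Lemma~\ref{ch4lemma2} (injectivity of $V$, resting on the invertibility of $\mathcal V$ in Theorem~\ref{ch4thinvV}) forces $\psi=T^+u-\Psi_0$, whence $\psi\in\widetilde H^{-\frac12}(\partial\Omega_D)$ restricts to $r_{\partial\Omega_N}T^+u=\psi_0$, recovering \eqref{ch4BVP3}. This establishes both that $u$ solves the BVP and that \eqref{ch4eqcond} holds.

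Finally, for $(iii)$ the equivalence from $(i)$ and $(ii)$ reduces everything to Theorem~\ref{ch4Thomsol}: existence of a BDIE solution follows by feeding the unique BVP solution through $(i)$, while for uniqueness any two solution triples have, by $(ii)$, first components solving the BVP, hence equal by Theorem~\ref{ch4Thomsol}, and then \eqref{ch4eqcond} forces the remaining components to coincide as well. I expect the main obstacle to be concentrated entirely in $(ii)$: one must carefully exploit the $\widetilde H$-support conditions on $\phi$ and $\psi$ to pass from equalities on all of $\partial\Omega$ to the restricted boundary conditions, and the decisive manoeuvre is the cancellation after subtracting the traced first equation from the second, followed by the injectivity argument via Lemma~\ref{ch4lemma2}.
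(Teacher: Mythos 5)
Your proposal is correct and follows essentially the same route as the paper's own proof: part $(i)$ by substituting \eqref{ch4eqcond} into the third Green identities, part $(ii)$ by the trace-subtraction cancellation giving the first relation in \eqref{ch4eqcond}, then Lemma~\ref{ch4lema1} and Lemma~\ref{ch4lemma2} giving $Au=f$ and the second relation, with the $\widetilde H$-support conditions recovering the Dirichlet and Neumann conditions, and part $(iii)$ by reduction to Theorem~\ref{ch4Thomsol}. The only (immaterial) differences are the order of steps in $(ii)$ and that the paper phrases $(iii)$ via the homogeneous system with $f=0$, $\Psi_0=0$, $\Phi_0=0$ rather than comparing two solution triples directly.
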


\begin{proof}
First, let us prove item $(i)$. Let $u\in H^{1}(\Omega)$ be a solution of the boundary value problem \eqref{ch4BVP}, which implies that $u\in H^{1,0}(\Omega,A)$,  and let $\phi$, $\psi$ be defined by \eqref{ch4eqcond}.
Then, due to \eqref{ch4BVP2} and \eqref{ch4BVP3}, we have \[(\psi,\phi) \in \widetilde{H}^{-\frac{1}{2}}(\partial\Omega_{D})\times\widetilde{H}^{\frac{1}{2}}(\partial\Omega_{N}).\] 

 Then, it immediately follows from the third Green identities \eqref{ch43GV} and \eqref{ch43GG} that the triple $(u,\phi, \psi)$ solves BDIE system $\mathcal{M}^{12}$.

Let us prove now item $(ii)$. Let the triple $(u, \psi,\phi )^{\top}\in H^{1}(\Omega)\times\widetilde{H}^{-\frac{1}{2}}(\partial\Omega_{D})\times\widetilde{H}^{\frac{1}{2}}(\partial\Omega_{N})$ solve the BDIE system. Taking the trace of equation \eqref{ch4SM12v} and subtracting it from equation \eqref{ch4SM12g}, we obtain
the first relation in \eqref{ch4eqcond}. 
Now, restricting it to $\partial\Omega_{D}$, and taking into account that $\phi$ vanishes there as ${\rm supp}\,\phi\subset \overline{\partial\Omega_{N}}$, we obtain that $\phi_{0}=\Phi_{0}=\gamma^{+}u$ on $\partial\Omega_{D}$ and, consequently, the Dirichlet condition \eqref{ch4BVP2} of the BVP  is satisfied.  

We proceed by implementing the Lemma \ref{ch4lema1} to the first equation, \eqref{ch4SM12v}, of system (M12), with $\Psi=\psi + \Psi_{0}$ and $\Phi = \phi + \Phi_{0}$. 
This implies that $u$ belongs to $H^{1,0}(\Omega,A)$, it is a solution of equation \eqref{ch4BVP1} and also the following equality holds,
\begin{equation*}\label{ch4M12a2}
V(\Psi_{0}+\psi - T^{+}u) - W(\Phi_{0} + \phi -\gamma^{+}u) = 0 \text{ in } \Omega.
\end{equation*}
By virtue of the first relation in \eqref{ch4eqcond}, the second term of the previous equation vanishes. Hence,
\begin{equation*}\label{ch4M12a3}
V(\Psi_{0}+\psi - T^{+}u)= 0 \quad \text{ in } \Omega.
\end{equation*}
Now, by virtue of Lemma \ref{ch4lemma2} we obtain
the second relation in \eqref{ch4eqcond}.
Since $\psi$ vanishes on $\partial\Omega_{N}$ and $\Psi_{0}=\psi_{0}$ on $\partial\Omega_{N}$, the second relation in \eqref{ch4eqcond} implies that $u$ satisfies the Neumann condition \eqref{ch4BVP3}. 

Item $(iii)$ immediately follows from the uniqueness of the solution of the mixed boundary value problem, cf. Theorem \ref{ch4Thomsol},
since the zero right-hand side fo the corresponding homogeneous BDIE can be considered as given by $f=0$, $\Psi_0=0$ and $\Phi_0=0$, cf. \eqref{ch4F0term}.
\end{proof}


BDIE system \eqref{ch4SM12v}-\eqref{ch4SM12g} can be written in the matrix notations as 
\begin{equation}\label{M12eq}
\mathcal{M}^{12}\mathcal{X}=\mathcal{F}^{12},
\end{equation}
where $\mathcal{X}$ represents the vector containing the unknowns of the system,
\begin{equation*}
\mathcal{X}=(u,\psi,\phi)^{\top}\in H^{1}(\Omega)\times\widetilde{H}^{-\frac{1}{2}}(\partial\Omega_{D})\times\widetilde{H}^{\frac{1}{2}}(\partial\Omega_{N}),
\end{equation*}
the right hand side vector is \[\mathcal{F}^{12}:= [ F_{0}, \gamma^{+}F_{0} - \Phi_{0} ]^{\top}\in H^{1}(\Omega)\times H^{\frac{1}{2}}(\partial\Omega),\]
and the matrix operator $\mathcal{M}^{12}$ is
\begin{equation}\label{M12mat}
   \mathcal{M}^{12}=
  \left[ {\begin{array}{ccc}
   I+\mathcal{R} & -V & W \\
   \gamma^{+}\mathcal{R} & -\mathcal{V} & \dfrac{1}{2}I + \mathcal{W} 
  \end{array} } \right].
\end{equation}
\begin{theorem}\label{T9}
The operator 
\begin{align}
\label{M12op0}
    \mathcal{M}^{12}&: H^{1,0}(\Omega)\times\widetilde{H}^{-\frac{1}{2}}(\partial\Omega_{D})\times\widetilde{H}^{\frac{1}{2}}(\partial\Omega_{N})\to H^{1,0}(\Omega)\times H^{\frac{1}{2}}(\partial\Omega)\hspace{-1em}
\end{align}
is continuous and continuously invertible.
\end{theorem}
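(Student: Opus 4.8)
The plan is to deduce the assertion from three ingredients: continuity of $\mathcal{M}^{12}$, its injectivity, and the fact that it is a Fredholm operator of index zero. An injective Fredholm operator of index zero is a bijection, and then the open mapping (bounded inverse) theorem furnishes continuity of the inverse. Continuity itself is immediate: every entry of the matrix \eqref{M12mat} is continuous on the indicated spaces by the mapping properties in Theorems \ref{ch4thmUR}, \ref{T3.1s0} and Corollary \ref{T3.3}. In particular $I+\mathcal{R}$, $V$ and $W$ land in $H^{1,0}(\Omega;A)$ by the mapping $\mathcal R:H^{1}(\Omega)\to H^{1,0}(\Omega;A)$ of Theorem \ref{ch4thmUR} and by \eqref{VHs1Ga}, \eqref{WHs1Ga}, while $\gamma^{+}\mathcal{R}$, $\mathcal{V}$ and $\tfrac12 I+\mathcal{W}$ map into $H^{\frac12}(\partial\Omega)$.

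For injectivity I would use the equivalence Theorem \ref{ch4EqTh}. If $\mathcal{X}=(u,\psi,\phi)^{\top}$ lies in the kernel, it solves the homogeneous system (M12), i.e.\ system (M12) with $f=0$, $\Psi_{0}=0$, $\Phi_{0}=0$ so that $F_{0}=0$; by item $(ii)$ of Theorem \ref{ch4EqTh} the component $u$ then solves the homogeneous mixed BVP, whence $u=0$ by the uniqueness in Theorem \ref{ch4Thomsol}, and \eqref{ch4eqcond} forces $\psi=0$ and $\phi=0$. Thus $\ker\mathcal{M}^{12}=\{0\}$. It is worth noting that the unique solvability of (M12) for right-hand sides of the special form $\mathcal{F}^{12}=[F_{0},\gamma^{+}F_{0}-\Phi_{0}]^{\top}$ does \emph{not} by itself yield surjectivity onto all of $H^{1,0}(\Omega;A)\times H^{\frac12}(\partial\Omega)$, since a general pair need not be of that form; this is precisely why a separate Fredholm argument is required.

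The core step is the Fredholm property of index zero, which I would obtain by splitting $\mathcal{M}^{12}=\mathcal{M}^{12}_{0}+\mathcal{M}^{12}_{c}$ with $\mathcal{M}^{12}_{0}$ invertible and $\mathcal{M}^{12}_{c}$ compact. Using the relations \eqref{ch4relR}, \eqref{ch4relDL}, \eqref{ch4relDVDL} that express the variable-coefficient operators through their Laplace counterparts, I would collect into $\mathcal{M}^{12}_{c}$ all genuinely smoothing contributions: the volume remainder $\mathcal{R}$ and its trace $\gamma^{+}\mathcal{R}$ (compact by the last assertions of Theorem \ref{ch4thmUR} with $S_{1}=\partial\Omega$), together with the corrective terms $V_{\Delta}(\,\cdot\,\partial_{n}\ln a)$ and $\mathcal{V}_{\Delta}(\,\cdot\,\partial_{n}\ln a)$ appearing in \eqref{ch4relDL}, \eqref{ch4relDVDL}, which are compact because $\mathcal{V}_{\Delta}$ gains a full Sobolev order and Rellich's embedding applies. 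After the isomorphism given by multiplication by $1/a$ on the $\psi$-component (cf.\ \eqref{ch4relSL}, \eqref{ch4relDVSL}), the principal part becomes the constant-coefficient BDIE matrix
\[
\mathcal{M}^{12}_{0}\;\longleftrightarrow\;
\begin{bmatrix} I & -V_{\Delta} & W_{\Delta}\\[2pt] 0 & -\mathcal{V}_{\Delta} & \tfrac12 I+\mathcal{W}_{\Delta}\end{bmatrix}.
\]
The zero boundary entry in the $u$-column lets one recover $u$ continuously from the boundary unknowns via $u=F_{1}+V_{\Delta}\psi-W_{\Delta}\phi\in H^{1,0}(\Omega;A)$, so $\mathcal{M}^{12}_{0}$ is invertible iff the boundary row $[-\mathcal{V}_{\Delta},\,\tfrac12 I+\mathcal{W}_{\Delta}]:\widetilde H^{-\frac12}(\partial\Omega_{D})\times\widetilde H^{\frac12}(\partial\Omega_{N})\to H^{\frac12}(\partial\Omega)$ is; the latter is the boundary-integral operator of the mixed Dirichlet--Neumann problem for the Laplacian on a Lipschitz domain, whose invertibility is classical and can be recovered by Lax--Milgram together with the invertibility of $\mathcal{V}_{\Delta}$ on $\partial\Omega_{D}$ (cf.\ Theorem \ref{ch4thinvV}).

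Combining, $\mathcal{M}^{12}_{0}$ invertible plus $\mathcal{M}^{12}_{c}$ compact makes $\mathcal{M}^{12}$ Fredholm of index zero; with the trivial kernel found above, its cokernel is also trivial, so $\mathcal{M}^{12}$ is a continuous bijection with continuous inverse. I expect the main obstacle to reside in the Fredholm step, and in particular in the compactness bookkeeping in the graph-norm space $H^{1,0}(\Omega;A)$ rather than in $H^{1}(\Omega)$: the compactness $\mathcal{R}:H^{s}(\Omega)\to H^{s}(\Omega)$ only controls the $H^{1}$-part of the first row, so one must check separately that $u\mapsto A\mathcal{R}u$ is compact into $L^{2}(\Omega)$, making $I+\mathcal{R}$ Fredholm on $H^{1,0}(\Omega;A)$; this is where the explicit form \eqref{ch4relR}, expressing $A\mathcal{R}u$ through Newtonian potentials of $u$ against derivatives of $\ln a$, must be used. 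Establishing genuine invertibility (not merely index-zero Fredholmness) of the principal boundary operator is the other point needing care.
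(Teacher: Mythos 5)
Your injectivity step and the continuity step are fine, but the core of your plan---Fredholmness of $\mathcal{M}^{12}$ by compact perturbation \emph{in the graph-norm space} $H^{1,0}(\Omega;A)$---breaks down exactly at the point you deferred. You require $u\mapsto A\mathcal{R}u$ to be compact from $H^{1,0}(\Omega;A)$ into $L^{2}(\Omega)$, and you expect \eqref{ch4relR} to deliver this; it delivers the opposite. Since $\Delta\mathcal{P}_{\Delta}g=g$ in $\Omega$, relation \eqref{ch4relR} gives exactly $\Delta\mathcal{R}u=\nabla\cdot(u\nabla\ln a)-u\,\Delta\ln a=\nabla u\cdot\nabla\ln a$, whence
\begin{equation*}
A\mathcal{R}u \;=\; a\,\Delta\mathcal{R}u+\nabla a\cdot\nabla\mathcal{R}u \;=\; \nabla u\cdot\nabla a+\nabla a\cdot\nabla\mathcal{R}u .
\end{equation*}
The second term is indeed compact (it factors through the compact operator $\mathcal{R}:H^{1}(\Omega)\to H^{1}(\Omega)$ of Theorem \ref{ch4thmUR}), but the first term is a genuine first-order differential operator with no smoothing at all: $u\mapsto\nabla u\cdot\nabla a$ is bounded but \emph{not} compact from $H^{1,0}(\Omega;A)$ to $L^{2}(\Omega)$ whenever $\nabla a\not\equiv0$ (a bounded sequence of functions harmonic in $\Omega$ is bounded in the graph norm, its gradients converge only weakly in $L^{2}$, so $\nabla u_{n}\cdot\nabla a$ has in general no strongly convergent subsequence). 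Hence $\mathcal{R}$ is not compact on $H^{1,0}(\Omega;A)$, $I+\mathcal{R}$ is not a compact perturbation of $I$ there, and your decomposition does not yield the Fredholm property. This is precisely why the paper runs the compact-perturbation argument only for the wider $H^{1}$-based setting (operator \eqref{M12op}), where compactness of $\mathcal{R}:H^{1}(\Omega)\to H^{1}(\Omega)$ suffices, and proves the present theorem by a completely different route.

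That route is the one you explicitly dismissed. You assert that a general right-hand side in $H^{1,0}\times H^{\frac12}(\partial\Omega)$ ``need not be of the form'' $[F_{0},\gamma^{+}F_{0}-\Phi_{0}]^{\top}$, so that a Fredholm argument is unavoidable; Lemma \ref{A4} of the Appendix says the contrary: every couple $(\widetilde{\mathcal F}_{1},\widetilde{\mathcal F}_{2})\in H^{1,0}(\Omega;\Delta)\times H^{\frac12}(\partial\Omega)$ admits the unique representation $\widetilde{\mathcal F}_{1}=\mathcal{P}f_{*}+V\Psi_{*}-W\Phi_{*}$, $\widetilde{\mathcal F}_{2}=\gamma^{+}\widetilde{\mathcal F}_{1}-\Phi_{*}$, with $(f_{*},\Psi_{*},\Phi_{*})$ depending linearly and continuously on $(\widetilde{\mathcal F}_{1},\widetilde{\mathcal F}_{2})$. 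The paper then applies the equivalence Theorem \ref{ch4EqTh} with these data and expresses the solution through the continuous inverse $(A^{DN})^{-1}$ of the mixed BVP, obtaining bijectivity and continuity of the inverse in one stroke, with no Fredholm theory. If you insist on a Fredholm-type proof, the salvageable version is: prove invertibility in the $H^{1}$ setting (where your compactness bookkeeping is correct and is the paper's proof of the last theorem), then bootstrap to $H^{1,0}$ via the first equation, $u=\mathcal{F}_{1}-\mathcal{R}u+V\psi-W\phi\in H^{1,0}(\Omega;A)$, using $\mathcal{R}:H^{1}(\Omega)\to H^{1,0}(\Omega;A)$ and \eqref{VHs1Ga}, \eqref{WHs1Ga}. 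Note, however, that the invertibility of the boundary operator $\bigl[-\mathcal{V}_{\Delta},\ \tfrac12 I+\mathcal{W}_{\Delta}\bigr]$ on $\widetilde H^{-\frac12}(\partial\Omega_{D})\times\widetilde H^{\frac12}(\partial\Omega_{N})$, which you treat as classical, is obtained in the paper (Theorem \ref{GTDeltaInv}) as a consequence of the very theorem you are proving, specialised to $a=1$; so in your scheme it must either be imported from the literature on mixed BIEs in Lipschitz domains or be proved independently---and the natural independent proof is again the representation-lemma argument.
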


\begin{proof}
The continuity of operator \eqref{M12op0} is implied by the mapping properties of the operators involved in matrix \eqref{M12mat}.

To prove invertibility of operator \eqref{M12op0}, let us consider
BDIE system \eqref{M12eq} with an arbitrary right hand side
$\widetilde{\mathcal F}=\{\widetilde{\mathcal F}_1,\widetilde{\mathcal F}_2\}^\top\in
H^{1,\,0}(\Omega ;\Delta)\times H^{\frac{1}{2}}(\partial\Omega)$. 
From Lemma \ref{A4} in the Appendix, we obtain the representation
\begin{eqnarray*}
&& 
\widetilde{\mathcal F}_1={\mathcal P}\,f_* + V\,\Psi_*-W\,\Phi_*
\;\;\text{in}\;\;  \Omega ,\\
&& 
\widetilde{\mathcal F}_2= \gamma^+\mathcal F^{12}_{*1} -\Phi_*\;\;\text{on}\;\;  \partial\Omega
\end{eqnarray*}
where the triple
\begin{equation}\label{5.33}
(f_*,\Psi_*,\Phi_*)^\top={\widetilde {\mathcal C}}_{*}\,\widetilde{\mathcal F} \in
L_2(\Omega )\times H^{-\frac{1}{2}}(\partial\Omega)\times H^{\frac{1}{2}}(\partial\Omega)
\end{equation}
is unique and the operator
\begin{equation}\label{2.63GG}
{\widetilde {\mathcal C}}_{*}\,:\,H^{1,\,0}(\Omega ;\Delta )\times
H^{\frac{1}{2}}(\partial\Omega)\to L_2(\Omega )\times H^{-\frac{1}{2}}(\partial\Omega)\times
H^{\frac{1}{2}}(\partial\Omega)
\end{equation}
 is linear and continuous.

 Applying the equivalence Theorem~\ref{ch4EqTh} with 
 \begin{equation*} 
f=f_*,\quad \Psi_0=\Psi_*,\quad \Phi_0=\Phi_*,\quad
\psi_0=r_{\partial\Omega_N}\Psi_0,\quad \varphi_0=r_{\partial\Omega_D}\Phi_0,
 \end{equation*} 
 we obtain that the system M12 is uniquely
solvable and its solution is 
 \begin{equation*} 
u=(A^{DN})^{-1}(f_*,r_{\partial\Omega_D}\Phi_*,r_{\partial\Omega_N}\Psi_*)^\top,
\quad \psi=T^+u-\Psi_*,\quad \phi=\gamma^+u-\Phi_*
 \end{equation*}
while $r_{\partial \Omega_N}\psi=0$, $r_{\partial\Omega_D}\phi=0$. Here $(A^{DN})^{-1}$
is the continuous inverse operator to the left-hand-side operator of
the mixed BVP \eqref{ch4BVP}, 
${A}^{DN}:
H^{1,0}(\Omega ;\Delta)\to L_2(\Omega )\times
H^{\frac{1}{2}}(\partial_D\Omega)\times H^{-\frac{1}{2}}(\partial_N\Omega)$. 
Representation
\eqref{5.33}, and continuity of operator  \eqref{2.63GG}
complete the proof of invertibility.
\end{proof}
In the particular case $a(x)=1$ at $x\in \Omega$,  \eqref{ch4operatorA}
becomes the classical Laplace equation, ${\mathcal R}=0$, and BDIE system
\eqref{ch4SM12vg} splits into the Boundary Integral Equation,
BIE,
\begin{equation}\label{4.5TG2Del}
\dfrac{1}{2}\phi-\mathcal{V}_\Delta\psi+\mathcal{W}_\Delta\phi=\gamma^{+}F_{\Delta 0}-\Phi_{0}\hspace{2em}{\rm on}\hspace{0.5em}\partial\Omega,
\end{equation}
where
$F_{\Delta 0}=\mathcal{P}f+V_\Delta\Psi_{0}-W_\Delta\Phi_{0}$, 
and the representation formula for $u$ in terms of $\varphi$ and
$\psi$,
\begin{eqnarray}
\label{4.5GT1Del} u =F_0+ V_{ \Delta \;}\psi - W_{ \Delta
\;}\varphi \qquad &&  \;\;\; \mbox{\rm in}\;\;\;\;   \Omega.
\end{eqnarray}

Then Theorem \ref{ch4EqTh} leads to the following assertion.
\begin{corollary}\label{T4.1TGDel}
Let $a=1$ in $\Omega$, $f\in L_2(\Omega)$, and let  $\Phi_0\in
H^{\frac{1}{2}}(\partial\Omega)$ and $\Psi_0\in H^{-\frac{1}{2}}(\partial\Omega)$ be some extensions of
$\varphi_0\in H^{\frac{1}{2}}(\partial\Omega_D)$ and $\psi_0\in H^{-\frac{1}{2}}(\partial\Omega_N)$,
respectively.

\mbox{\rm (i)} If some $u\in H^1(\Omega)$ solves mixed BVP
\eqref{ch4BVP} in $\Omega$, then the solution is unique,
the couple $(\psi, \varphi)\in {\widetilde H}^{-\frac{1}{2}}(\partial\Omega_D)\times
{\widetilde H}^{\frac{1}{2}}(\partial\Omega_N)$ given by \eqref{ch4eqcond} solves BIE
\eqref{4.5TG2Del}, and $u$ satisfies
\eqref{4.5GT1Del}.

\mbox{\rm (ii)} If a couple $(\psi, \varphi)\in {\widetilde
H}^{-\frac{1}{2}}(\partial\Omega_D)\times {\widetilde H}^{\frac{1}{2}}(\partial\Omega_N)$ solves  BIE 
\eqref{4.5TG2Del}, then  $u$ given by
\eqref{4.5GT1Del} solves BVP \eqref{ch4BVP}  and equations
\eqref{ch4eqcond} hold. Moreover, BIE \eqref{4.5TG2Del}
is uniquely solvable in ${\widetilde H}^{-\frac{1}{2}}(\partial\Omega_D)\times
{\widetilde H}^{\frac{1}{2}}(\partial\Omega_N)$.
\end{corollary}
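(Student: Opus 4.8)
The plan is to exploit the fact that for $a\equiv 1$ the matrix operator $\mathcal M^{12}$ degenerates and the BDIE system (M12) decouples, after which the assertion becomes a direct transcription of Theorem~\ref{ch4EqTh}. First I would record the reduction: when $a=1$ in $\Omega$ we have $\ln a\equiv 0$, so $\nabla\ln a=0$ and $\Delta\ln a=0$, whence \eqref{ch4relR} gives $\mathcal R=0$ and \eqref{ch4relSL}--\eqref{ch4relDVDL} give that the variable-coefficient potentials collapse to their Laplace counterparts, $V=V_\Delta$, $W=W_\Delta$, $\mathcal V=\mathcal V_\Delta$, $\mathcal W=\mathcal W_\Delta$, together with $\mathcal Pf=\mathcal P_\Delta f$ and hence $F_0=F_{\Delta 0}$. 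Substituting these identities into \eqref{ch4SM12v}--\eqref{ch4SM12g}, the first BDIE equation becomes $u-V_\Delta\psi+W_\Delta\varphi=F_0$, i.e. exactly the representation formula \eqref{4.5GT1Del}, while the second equation loses its $\gamma^+\mathcal Ru$ term and becomes the pure boundary integral equation \eqref{4.5TG2Del} in the unknowns $(\psi,\varphi)$ alone (here $\varphi$ is the unknown denoted $\phi$ in (M12)).

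Next I would set up a one-to-one correspondence between the two solution sets. Given $(\psi,\varphi)$ solving \eqref{4.5TG2Del} and defining $u$ by \eqref{4.5GT1Del}, the triple $(u,\psi,\varphi)$ satisfies both equations of (M12) — the first by construction and the second by hypothesis. Conversely, any solution $(u,\psi,\varphi)$ of (M12) yields, through its second (now $u$-free) equation, a solution $(\psi,\varphi)$ of \eqref{4.5TG2Del}, while its first equation forces $u$ to coincide with the right-hand side of \eqref{4.5GT1Del}. Thus the projection onto $(\psi,\varphi)$ is a bijection between the solution sets, with inverse supplied by the representation formula \eqref{4.5GT1Del}.

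With these two observations, items (i) and (ii) follow from Theorem~\ref{ch4EqTh}. For (i), a solution $u\in H^1(\Omega)$ of the BVP is unique by Theorem~\ref{ch4Thomsol}, and Theorem~\ref{ch4EqTh}(i) makes the triple $(u,\psi,\varphi)$ with $(\psi,\varphi)$ given by \eqref{ch4eqcond} a solution of (M12); by the decoupling this precisely says that $(\psi,\varphi)$ solves \eqref{4.5TG2Del} and $u$ satisfies \eqref{4.5GT1Del}. For (ii), if $(\psi,\varphi)$ solves \eqref{4.5TG2Del}, then $u$ from \eqref{4.5GT1Del} makes $(u,\psi,\varphi)$ a solution of (M12), so Theorem~\ref{ch4EqTh}(ii) gives that $u$ solves the BVP and that \eqref{ch4eqcond} holds; finally the unique solvability of (M12) from Theorem~\ref{ch4EqTh}(iii), transported across the bijection of the previous paragraph, delivers the unique solvability of \eqref{4.5TG2Del}. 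The only delicate point — and the step I would write out explicitly rather than dismiss as ``decoupling'' — is this last transfer of uniqueness from the full system to the scalar boundary equation, which is exactly what the bijection is designed to justify.
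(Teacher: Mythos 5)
Your proposal is correct and takes essentially the same route as the paper: there the corollary is obtained by observing that for $a\equiv1$ one has $\mathcal{R}=0$ and all parametrix-based potentials collapse to their Laplace counterparts, so that system (M12) splits into the BIE \eqref{4.5TG2Del} plus the representation formula \eqref{4.5GT1Del}, after which Theorem~\ref{ch4EqTh} is invoked. Your explicit bijection between the solution set of \eqref{4.5TG2Del} and that of (M12) is exactly the content of the paper's terse ``Theorem~\ref{ch4EqTh} leads to the following assertion,'' spelled out in full.
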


BIE (\ref{4.5TG2Del}) can be rewritten in the
form
\begin{equation}
\label{4.22TGDel}
  \widehat{\mathcal M}^{12}_\Delta \widehat{\mathcal U}_\Delta=\widehat{\mathcal F}^{12}_\Delta,
\end{equation}
where $\widehat{\mathcal U}^\top_\Delta:=(\psi, \varphi)\in {\widetilde
H}^{-\frac{1}{2}}(\partial\Omega_D)\times {\widetilde H}^{\frac{1}{2}}(\partial\Omega_N)$,
\begin{equation}
\label{4.19TGDelhat} \widehat{\mathcal M}^{12}_\Delta:= \left[
 -{\mathcal V}_\Delta,\
\Big(\frac{1}{2}\,I+{\mathcal W}_\Delta\Big)
\right],\quad
\widehat{\mathcal F}^{12}_\Delta:=\gamma^{+}F_{\Delta 0}-\Phi_{0}.
\end{equation}

  \begin{theorem}\label{GTDeltaInv} 
The operator
\begin{align}\label{hatM}
\widehat{\mathcal M}^{12}_\Delta : {\widetilde H}^{-\frac{1}{2}}(\partial\Omega_D)\times 
{\widetilde H}^{\frac{1}{2}}(\partial\Omega_N)\to H^{-\frac{1}{2}}(\partial\Omega)
\end{align}
is continuous and continuously invertible.
\end{theorem}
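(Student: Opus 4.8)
The plan is to obtain the invertibility of $\widehat{\mathcal M}^{12}_\Delta$ as the constant-coefficient specialisation of Theorem~\ref{T9}. When $a\equiv 1$ we have $\mathcal R=0$, so the $(2,1)$ entry $\gamma^+\mathcal R$ of the matrix \eqref{M12mat} vanishes and the operator \eqref{M12mat}, which in this case we denote $\mathcal M^{12}_\Delta$, becomes block upper-triangular: its second row is exactly $\widehat{\mathcal M}^{12}_\Delta$ acting on $(\psi,\varphi)$, while its first row merely expresses $u$ through the representation formula \eqref{4.5GT1Del}. Continuity of \eqref{hatM} follows at once from Corollary~\ref{T3.3} with $s=1$, which gives the continuity of $\mathcal V_\Delta:H^{-\frac12}(\partial\Omega)\to H^{\frac12}(\partial\Omega)$ and of $\tfrac12 I+\mathcal W_\Delta:H^{\frac12}(\partial\Omega)\to H^{\frac12}(\partial\Omega)$, composed with the natural inclusions $\widetilde H^{-\frac12}(\partial\Omega_D)\hookrightarrow H^{-\frac12}(\partial\Omega)$ and $\widetilde H^{\frac12}(\partial\Omega_N)\hookrightarrow H^{\frac12}(\partial\Omega)$.

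For injectivity, suppose $(\psi,\varphi)$ solves $\widehat{\mathcal M}^{12}_\Delta(\psi,\varphi)=0$, i.e.\ BIE \eqref{4.5TG2Del} with right-hand side corresponding to $f=0$, $\Psi_0=0$, $\Phi_0=0$. Setting $u:=V_\Delta\psi-W_\Delta\varphi$, which is the value furnished by \eqref{4.5GT1Del} for such vanishing data and which belongs to $H^{1,0}(\Omega;\Delta)$ by \eqref{VHs1Ga}, \eqref{WHs1Ga}, the triple $(u,\psi,\varphi)$ satisfies $\mathcal M^{12}_\Delta(u,\psi,\varphi)=0$: the first row holds by the very definition of $u$ together with $\mathcal R=0$, and the second row is the homogeneous BIE. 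Injectivity of $\mathcal M^{12}$ from Theorem~\ref{T9} then forces $(u,\psi,\varphi)=0$, whence $(\psi,\varphi)=0$. For surjectivity, given any $g$ in the target space $H^{\frac12}(\partial\Omega)$, Theorem~\ref{T9} supplies a solution $(u,\psi,\varphi)$ of $\mathcal M^{12}_\Delta(u,\psi,\varphi)=(0,g)^\top$, and reading off its second component yields $\widehat{\mathcal M}^{12}_\Delta(\psi,\varphi)=g$. Thus \eqref{hatM} is a continuous bijection, and the continuity of its inverse is the Banach bounded-inverse theorem.

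A self-contained alternative, bypassing Theorem~\ref{T9}, is to argue directly through the equivalence Corollary~\ref{T4.1TGDel}: injectivity again reduces the homogeneous BIE to the homogeneous mixed Laplace BVP, which admits only the trivial solution, and then \eqref{ch4eqcond} forces $\psi=\varphi=0$; surjectivity requires first representing an arbitrary $g\in H^{\frac12}(\partial\Omega)$ as $g=\gamma^+F_{\Delta 0}-\Phi_0$ for admissible data $(f,\Psi_0,\Phi_0)$ via the representation Lemma~\ref{A4}, then solving the mixed BVP and invoking Corollary~\ref{T4.1TGDel}(i). I expect the surjectivity step to be the only genuine obstacle: it is precisely where the representation Lemma~\ref{A4} (equivalently, the full force of Theorem~\ref{T9}) is needed, the injectivity and continuity being immediate consequences of the uniqueness of the mixed BVP and of the mapping properties already recorded in Corollary~\ref{T3.3}.
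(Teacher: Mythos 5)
Your proof is correct, and its backbone coincides with the paper's: both obtain solvability by embedding the BIE into the extended system \eqref{4.22TGDel2} with the block-triangular matrix \eqref{4.32TGDel} and invoking the invertibility of $\mathcal{M}^{12}$ from Theorem~\ref{T9} specialised to $a=1$ (for which $\mathcal{R}=0$). The one place you genuinely deviate is injectivity: the paper derives it from the equivalence Corollary~\ref{T4.1TGDel} (homogeneous BIE $\Rightarrow$ homogeneous mixed BVP $\Rightarrow$ trivial solution), so that Theorem~\ref{T9} is used only to manufacture a continuous \emph{right} inverse, afterwards upgraded to a two-sided one; you instead lift a kernel element $(\psi,\varphi)$ of $\widehat{\mathcal M}^{12}_\Delta$ to the triple $\bigl(V_\Delta\psi-W_\Delta\varphi,\ \psi,\ \varphi\bigr)$ in the kernel of $\mathcal{M}^{12}_\Delta$ --- legitimate, since \eqref{VHs1Ga} and \eqref{WHs1Ga} place this $u$ in $H^{1,0}(\Omega;\Delta)$ --- and quote the injectivity already contained in Theorem~\ref{T9}. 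This makes your main argument rest on Theorem~\ref{T9} alone, with Corollary~\ref{T4.1TGDel} relegated to the alternative route you sketch, whereas the paper's version displays explicitly the BVP uniqueness behind the kernel triviality; the two are logically equivalent, and neither is circular, since Theorem~\ref{T9} is proved independently via Lemma~\ref{A4}. Your appeal to the bounded-inverse theorem, where the paper instead composes $(\mathcal{M}^{12}_\Delta)^{-1}$ with $g\mapsto(0,g)^\top$ and projects to exhibit the continuous right inverse directly, is an immaterial difference. One bookkeeping remark: the target space in \eqref{hatM} is printed as $H^{-\frac{1}{2}}(\partial\Omega)$, yet both entries $-\mathcal{V}_\Delta$ and $\frac{1}{2}I+\mathcal{W}_\Delta$ in \eqref{4.19TGDelhat} map into $H^{\frac{1}{2}}(\partial\Omega)$; you work throughout with $H^{\frac{1}{2}}(\partial\Omega)$, exactly as the paper's own proof does when it takes an arbitrary $\widehat{\mathcal F}^{12}_\Delta\in H^{\frac{1}{2}}(\partial\Omega)$, so this is a typo in the theorem statement rather than a defect of your argument.
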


\begin{proof} 
The continuity of operator  \eqref{hatM} is implied by the mapping properties of the operators involved in the matrix $\widehat{\mathcal M}^{12}_\Delta$.

A solution of BIE \eqref{4.22TGDel} with an
arbitrary
$\widehat{\mathcal F}^{12}_\Delta\in H^{\frac{1}{2}}(\partial\Omega)$ is delivered by
the couple $(\psi, \varphi)$ satisfying the extended system
\begin{equation}
\label{4.22TGDel2}
  {\mathcal M}^{12}_\Delta {\mathcal U}={\mathcal F}^{12}_{\Delta 0},
\end{equation}
where 
$\mathcal U=(u,\ \psi,\ \varphi)^\top$, ${\mathcal F}^{12}_{\Delta 0}=(0,\ \widehat{\mathcal F}^{12}_{\Delta},)^\top$, 
and
\begin{equation}
\label{4.32TGDel} {\mathcal M}^{12}_\Delta:= \left[
\begin{array}{ccc}
I& -V_\Delta & W_\Delta \\
0  & -{\mathcal V}_\Delta & \displaystyle \frac{1}{2}\,I+\,{\mathcal W}_\Delta
\end{array}
\right]\,.
\end{equation}
The operator 
${\mathcal M}^{12}_\Delta: H^{1,0}(\Omega)\times\widetilde{H}^{-\frac{1}{2}}(\partial\Omega_{D})
\times\widetilde{H}^{\frac{1}{2}}(\partial\Omega_{N})\to H^{1,0}(\Omega)\times H^{\frac{1}{2}}(\partial\Omega)$  
has a continuous inverse due to
Theorem \ref{M12op0} for $a=1$. 
Consequently, operator
\eqref{hatM} has a right continuous inverse,
which is also a two-side inverse due to injectivity of 
operator \eqref{hatM} implied by Corollary \ref{T4.1TGDel}.
\end{proof}

Now we prove the counterpart of Theorem \ref{T9} in wider
spaces.
\begin{theorem}
The operator 
\begin{align}
    \label{M12op}
        \mathcal{M}^{12} &: H^{1}(\Omega)\times\widetilde{H}^{-\frac{1}{2}}(\partial\Omega_{D})\times\widetilde{H}^{\frac{1}{2}}(\partial\Omega_{N})\to H^{1}(\Omega)\times H^{\frac{1}{2}}(\partial\Omega).
\end{align}
is continuous and continuously invertible.
\end{theorem}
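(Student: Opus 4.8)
The plan is to reduce everything to the already-established invertibility of $\mathcal{M}^{12}$ on the narrower space, Theorem~\ref{T9}, by absorbing the ``missing'' $A$-regularity of the data into the remainder potential.

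First I would record continuity. By the mapping properties collected in Theorems~\ref{ch4thmUR} and~\ref{T3.1s0}, every entry of the matrix \eqref{M12mat} is continuous on the indicated spaces; in particular, for $(u,\psi,\phi)$ in the source space of \eqref{M12op} the first component of the image, $u+\mathcal{R}u-V\psi+W\phi$, lies in $H^{1}(\Omega)$ because $u\in H^{1}(\Omega)$ while $\mathcal{R}u$, $V\psi$ and $W\phi$ even belong to $H^{1,0}(\Omega;A)\subset H^{1}(\Omega)$. Hence operator \eqref{M12op} is continuous.

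For invertibility, given a right-hand side $(\mathcal{F}_1,\mathcal{F}_2)\in H^{1}(\Omega)\times H^{\frac12}(\partial\Omega)$, I would seek the solution in the form $u=w+\mathcal{F}_1$ with a new unknown $w$. Substituting this into the two equations \eqref{ch4SM12vg} and cancelling the terms $\pm\mathcal{F}_1$ transforms the system into
\begin{equation*}
\mathcal{M}^{12}(w,\psi,\phi)^{\top}=\big(-\mathcal{R}\mathcal{F}_1,\ \mathcal{F}_2-\gamma^{+}\mathcal{R}\mathcal{F}_1\big)^{\top}.
\end{equation*}
The crucial point --- and the single step that makes the argument work --- is that the new right-hand side lands in the \emph{narrower} target space $H^{1,0}(\Omega;A)\times H^{\frac12}(\partial\Omega)$: by the last continuity statement of Theorem~\ref{ch4thmUR} the remainder potential smooths $H^{1}(\Omega)$ into $H^{1,0}(\Omega;A)$, so $\mathcal{R}\mathcal{F}_1\in H^{1,0}(\Omega;A)$ and consequently $\gamma^{+}\mathcal{R}\mathcal{F}_1\in H^{\frac12}(\partial\Omega)$. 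Theorem~\ref{T9} then provides a unique $(w,\psi,\phi)\in H^{1,0}(\Omega;A)\times\widetilde{H}^{-\frac12}(\partial\Omega_D)\times\widetilde{H}^{\frac12}(\partial\Omega_N)$, and setting $u:=w+\mathcal{F}_1\in H^{1}(\Omega)$ yields a solution of the original system. This gives surjectivity, and the formula exhibits the inverse as the composition of the continuous maps $(\mathcal{F}_1,\mathcal{F}_2)\mapsto(-\mathcal{R}\mathcal{F}_1,\mathcal{F}_2-\gamma^{+}\mathcal{R}\mathcal{F}_1)$, the inverse supplied by Theorem~\ref{T9}, and $w\mapsto w+\mathcal{F}_1$, so the inverse is continuous.

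It remains to confirm uniqueness in the wider space, which I expect to be the only place needing a separate word. If $\mathcal{M}^{12}(u,\psi,\phi)^{\top}=0$ with $u\in H^{1}(\Omega)$, then the first equation reads $u=-\mathcal{R}u+V\psi-W\phi$, whose right-hand side lies in $H^{1,0}(\Omega;A)$ by Theorem~\ref{ch4thmUR} and relations \eqref{VHs1Ga}, \eqref{WHs1Ga}; hence $u\in H^{1,0}(\Omega;A)$ and the triple belongs to the source space of Theorem~\ref{T9}, where injectivity forces it to vanish. Thus operator \eqref{M12op} is a continuous bijection with continuous inverse. The main (and essentially only) obstacle is the regularity bookkeeping around the datum $\mathcal{F}_1$: the whole argument hinges on the one-order-plus-$A$ smoothing $\mathcal{R}:H^{1}(\Omega)\to H^{1,0}(\Omega;A)$, which is precisely what lets the wider problem be recast as the narrower one already solved in Theorem~\ref{T9}.
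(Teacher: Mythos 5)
Your proof is correct, but it takes a genuinely different route from the paper. The paper proves invertibility of \eqref{M12op} by a Fredholm argument: it introduces the auxiliary block-triangular operator $\mathcal{M}_{0}^{12}$ whose second row is built from the Laplace-based boundary operators, proves its invertibility (via relation \eqref{ch4relDVSL}, Theorem \ref{GTDeltaInv} and the triangular structure), then shows that $\mathcal{M}^{12}-\mathcal{M}_{0}^{12}$ is compact --- using the Rellich embedding together with representations \eqref{ch4relDL}, \eqref{ch4relDVDL} and the compactness of $\mathcal{R}:H^{1}(\Omega)\to H^{1}(\Omega)$ --- so that $\mathcal{M}^{12}$ is a zero-index Fredholm perturbation of an invertible operator, and finally invokes injectivity from item $(iii)$ of Theorem \ref{ch4EqTh}. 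You instead reduce the wider-space problem directly to Theorem \ref{T9} by the substitution $u=w+\mathcal{F}_1$, observing that the transformed right-hand side $\bigl(-\mathcal{R}\mathcal{F}_1,\ \mathcal{F}_2-\gamma^{+}\mathcal{R}\mathcal{F}_1\bigr)$ lands in $H^{1,0}(\Omega;A)\times H^{\frac12}(\partial\Omega)$ thanks to the smoothing property $\mathcal{R}:H^{1}(\Omega)\to H^{1,0}(\Omega;A)$ of Theorem \ref{ch4thmUR}, and you settle uniqueness by the same regularity bootstrap applied to the homogeneous system (so that any wider-space null vector automatically lies in the narrower space where Theorem \ref{T9} is injective). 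Both arguments are sound and both ultimately rest on Theorem \ref{T9}; yours is shorter and avoids Fredholm theory and compactness altogether, making transparent that the wider-space result is a formal consequence of the narrower-space one whenever the off-diagonal entries smooth $H^{1}(\Omega)$ into $H^{1,0}(\Omega;A)$. What the paper's longer route buys is the intermediate machinery --- invertibility of $\widehat{\mathcal M}^{12}_{02}$ and of $\mathcal{M}_{0}^{12}$, and the explicit compact-perturbation structure of $\mathcal{M}^{12}$ --- which is of independent interest, follows the standard pattern in the BDIE literature, and remains available in situations (e.g.\ less smooth coefficients) where the clean $\mathcal{R}$-smoothing reduction may fail.
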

\begin{proof}
The continuity of operator  \eqref{M12op} is implied by the mapping properties of the operators involved in matrix \eqref{M12mat}.

Let now $\mathcal{M}_{0}^{12}$ be the matrix operator defined by
\begin{equation*}\label{ch4M012}
  \mathcal{M}_{0}^{12}:=
  \left[ {\begin{array}{ccc}
   I & -V & W_\Delta \\
   0 & -\mathcal{V} & \dfrac{1}{2}I+\mathcal{W}_\Delta \\
  \end{array} } \right].
\end{equation*}
The operator $\mathcal{M}_{0}^{12}$ 
\begin{align}\label{M012}
\mathcal{M}_{0}^{12}: H^{1}(\Omega)\times\widetilde{H}^{-\frac{1}{2}}(\partial\Omega_{D})\times\widetilde{H}^{\frac{1}{2}}(\partial\Omega_{N})\to H^{1}(\Omega)\times H^{\frac{1}{2}}(\partial\Omega).
\end{align}
 is also continuous due to the mapping properties of the operators involved.
 
Let us prove that operator \eqref{M012} is invertible.
First we remark that due to relation \eqref{ch4relDVSL} its second line operator can presented as   
$$
\mathcal{M}_{02}^{12}(\psi, \varphi)^\top
=-{\mathcal V}\psi+ \displaystyle (\frac{1}{2}\,I+\,{\mathcal W}_\Delta)\phi
=\mathcal{M}_{\Delta}^{12}{\rm diag}(\frac{1}{a},1) (\psi, \varphi)^\top.
$$
Then the continuous invertibility of operator \eqref{GTDeltaInv} and  condition \eqref{a-cond} for the coefficient $a$ imply that the operator 
\begin{align*}\label{hatM1}
\widehat{\mathcal M}^{12}_{02}=[-{\mathcal V},\ \frac{1}{2}\,I+\,{\mathcal W}_\Delta] : {\widetilde H}^{-\frac{1}{2}}(\partial\Omega_D)\times 
{\widetilde H}^{\frac{1}{2}}(\partial\Omega_N)\to H^{-\frac{1}{2}}(\partial\Omega)
\end{align*}
is invertible. 
Due to the block-triangular structure of operator $\mathcal{M}_{0}^{12}$ and obvious invertibility of the identity operator, $I$, in $H^{1}(\Omega)$, this, in turn immediately implies invertibility of operator \eqref{M012}.

Further, the operators $V : H^{-\frac{1}{2}}(\partial\Omega) \to H^{1}(\Omega)$ and 
$\mathcal V: H^{-\frac{1}{2}}(\partial\Omega) \to H^{\frac{1}{2}}(\partial\Omega)$ are continuous
by Theorem \ref{T3.1s0} and Corollary \ref{T3.3}. 
Hence, the operators $V : \widetilde H^{\frac{1}{2}}(\partial\Omega_N) \to H^{1}(\Omega)$ and 
$\mathcal V:  \widetilde H^{\frac{1}{2}}(\partial\Omega_N) \to H^{\frac{1}{2}}(\partial\Omega)$
are compact by the Rellich embedding Theorem. 
The operator $\mathcal{R}:H^{1}(\Omega)\to H^{1}(\Omega)$ is also compact by Theorem \ref{ch4thmUR}.
These compactness properties together with representations \eqref{ch4relDL} and \eqref{ch4relDVDL} imply that the operator \eqref{M012}
is a compact perturbation of the operator \eqref{M12op}, which implies its Fredholm property with index one.

Finally, the Fredholm property and the injectivity of operator $\mathcal{M}^{12}$, following from item $(iii)$ of Theorem  \ref{ch4EqTh}, imply the continuous invertibility of operator \eqref{M12op}.
\end{proof}

 
\section{Appendix}
\

We provide below  a simplified version of Lemma 5.5  in \cite{united}. It was proved there for domains with infinitely smooth boundaries but the proof is word-for-word for the Lipschitz domains as well.

\begin{lemma}
\label{Fto-fPsi}  For any function $\mathcal F_0 \in
H^{1,0}(\Omega;\Delta)$, there exists a unique couple
$(f_\Delta ,\Psi_\Delta )={\mathcal C}_0\mathcal F_0 \in L_2(\Omega)\times
{H}^{-\frac{1}{2}}(\partial\Omega)$
 such that
\begin{eqnarray}
\label{4.8HDPsi} \mathcal F_0  &=& \mathcal P_\Delta f_\Delta + V_\Delta\Psi_\Delta  \quad
{\rm in}\,\,\Omega,
\end{eqnarray}
and ${\mathcal C}_0\,:\,H^{1,0}(\Omega;\Delta)\to
L_2(\Omega)\times {H}^{-\frac{1}{2}}(\partial\Omega)$ is a linear bounded operator.
\end{lemma}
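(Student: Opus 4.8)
The plan is to split $\mathcal F_0$ into a volume (Newtonian) part carrying its Laplacian and a harmonic remainder that can be written as a single-layer potential. First I would set $f_\Delta:=\Delta\mathcal F_0$, which lies in $L_2(\Omega)$ precisely because $\mathcal F_0\in H^{1,0}(\Omega;\Delta)$, and which is in any case forced by applying $\Delta$ to \eqref{4.8HDPsi}, since $\Delta V_\Delta\Psi_\Delta=0$ in $\Omega$. Using the standard property of the harmonic Newtonian potential that $\Delta\mathcal P_\Delta g=g$ in $\Omega$ for $g\in L_2(\Omega)$, the remainder $w:=\mathcal F_0-\mathcal P_\Delta f_\Delta$ satisfies $\Delta w=0$ in $\Omega$, and by the mapping property $\mathcal P_\Delta:L_2(\Omega)\to H^{2,0}(\Omega;\Delta)$ from Theorem \ref{ch4thmUR} with $a=1$, it belongs to $H^{1,0}(\Omega;\Delta)$.

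Next I would recover $\Psi_\Delta$ from the boundary trace of this harmonic remainder. Taking $\gamma^+w\in H^{\frac12}(\partial\Omega)$ and invoking the continuous invertibility of $\mathcal V_\Delta:H^{-\frac12}(\partial\Omega)\to H^{\frac12}(\partial\Omega)$ (established for $a=1$ at the start of the proof of Theorem \ref{ch4thinvV}), I would define $\Psi_\Delta:=\mathcal V_\Delta^{-1}\gamma^+w\in H^{-\frac12}(\partial\Omega)$. By the continuity \eqref{VHs1Ga}, the function $V_\Delta\Psi_\Delta$ is harmonic in $\Omega$ and, by the first jump relation in \eqref{WHs1gj} together with definition \eqref{3.11} (both with $a=1$), satisfies $\gamma^+V_\Delta\Psi_\Delta=\mathcal V_\Delta\Psi_\Delta=\gamma^+w$. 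Hence $\zeta:=w-V_\Delta\Psi_\Delta$ is harmonic with vanishing Dirichlet trace, and the first Green identity \eqref{ch4green1.1} applied with $u=v=\zeta$ and $a=1$ gives $\int_\Omega|\nabla\zeta|^2\,dx=0$, so $\zeta=0$, i.e. $w=V_\Delta\Psi_\Delta$ in $\Omega$. This is exactly representation \eqref{4.8HDPsi}.

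For uniqueness I would run the same argument in reverse: if $\mathcal P_\Delta f_\Delta+V_\Delta\Psi_\Delta=0$ in $\Omega$, then applying $\Delta$ forces $f_\Delta=0$, after which $V_\Delta\Psi_\Delta=0$ in $\Omega$, and taking the trace together with the invertibility of $\mathcal V_\Delta$ (as in Lemma \ref{ch4lemma2}) forces $\Psi_\Delta=0$. Linearity of $\mathcal C_0$ is immediate from the construction, and boundedness follows by composing bounded maps: $\mathcal F_0\mapsto f_\Delta$ is bounded $H^{1,0}(\Omega;\Delta)\to L_2(\Omega)$ directly from the definition of the graph norm, while $\mathcal F_0\mapsto\Psi_\Delta$ factors through the bounded assignments $\mathcal F_0\mapsto w$ (bounded into $H^1(\Omega)$ via Theorem \ref{ch4thmUR}), $w\mapsto\gamma^+w$ (the trace theorem), and $\gamma^+w\mapsto\mathcal V_\Delta^{-1}\gamma^+w$ (the continuous inverse).

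The only genuinely nontrivial ingredient is the representability of the harmonic remainder $w$ as a single-layer potential, which is where the invertibility of the harmonic single-layer boundary operator $\mathcal V_\Delta$ and the uniqueness of the homogeneous Dirichlet problem for the Laplacian both enter; everything else is bookkeeping with the mapping and jump properties already established above. I would therefore expect the main care to lie in applying the $a=1$ specialisations of the earlier statements consistently, rather than in any deep difficulty.
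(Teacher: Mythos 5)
Your proof is correct and follows essentially the same route as the argument the paper relies on: the paper does not reprove this lemma but cites Lemma 5.5 of \cite{united} (remarking that the proof carries over verbatim to Lipschitz domains), and that proof is precisely your decomposition --- $f_\Delta=\Delta\mathcal F_0$ via the graph norm, the harmonic remainder $w$ represented as a single-layer potential by solving $\mathcal V_\Delta\Psi_\Delta=\gamma^+w$ with the invertible operator $\mathcal V_\Delta$, and identification of $w$ with $V_\Delta\Psi_\Delta$ through uniqueness of the Dirichlet problem, with uniqueness and boundedness of $\mathcal C_0$ obtained exactly as you describe. The only cosmetic point is that $\int_\Omega|\nabla\zeta|^2\,dx=0$ first gives that $\zeta$ is constant in the connected domain $\Omega$, and the vanishing trace then forces that constant to be zero.
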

Employing Lemma~\ref{Fto-fPsi} for $\mathcal F_{0}=\mathcal F_1+W_\Delta\mathcal F_2\in H^{1,0}(\Omega;\Delta)$, we can easily prove the following assertion (cf. Corollary B.1 in \cite{ayele}.) 
\begin{lemma}
\label{A2}  For any couple 
$(\mathcal F_1,\mathcal F_2)^\top\in
H^{1,0}(\Omega ;\Delta)\times H^{\frac{1}{2}}(\partial\Omega)
$
there exists a unique triple
$$
(f_\Delta,\Psi_\Delta,\Phi_\Delta)^\top=\mathcal C_1\,(\mathcal F_1,\mathcal F_2)^\top \in
L_2(\Omega )\times H^{-\frac{1}{2}}(\partial\Omega)\times H^{\frac{1}{2}}(\partial\Omega)
$$
   such that
\begin{eqnarray*}
&& 
\mathcal F_1={\mathcal P}_\Delta\,f_\Delta + V_\Delta\,\Psi_\Delta-W_\Delta\,\Phi_\Delta
\;\;{\rm in}\;\;  \Omega ,\\
&& 
 \mathcal F_2= \Phi_\Delta \;\;{\rm on}\;\; \partial\Omega.
\end{eqnarray*}
Moreover, the operator
$$ \mathcal C_1\,:\,H^{1,\,0}(\Omega ;\Delta)\times H^{\frac{1}{2}}(\partial\Omega)\to
L_2(\Omega )\times H^{-\frac{1}{2}}(\partial\Omega)\times H^{\frac{1}{2}}(\partial\Omega)
$$
  is linear and continuous.
\end{lemma}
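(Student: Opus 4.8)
The plan is to reduce the assertion to the already-established Lemma~\ref{Fto-fPsi} by first fixing the boundary component $\Phi_\Delta$ and then absorbing the double-layer term into a single element of $H^{1,0}(\Omega;\Delta)$, exactly as suggested by the sentence preceding the statement.

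First I would set $\Phi_\Delta:=\mathcal F_2$, which forces the second required identity $\mathcal F_2=\Phi_\Delta$ on $\partial\Omega$ and is the only choice compatible with it, so $\Phi_\Delta$ is already pinned down. Next I would introduce $\mathcal F_0:=\mathcal F_1+W_\Delta\mathcal F_2$ and verify that $\mathcal F_0\in H^{1,0}(\Omega;\Delta)$: since $\mathcal F_1\in H^{1,0}(\Omega;\Delta)$ by hypothesis and $\mathcal F_2\in H^{\frac12}(\partial\Omega)$, the continuity of the Laplace double-layer potential $W_\Delta:H^{\frac12}(\partial\Omega)\to H^{1,0}(\Omega;\Delta)$ --- the $a=1$ case of operator~\eqref{WHs1Ga} in Theorem~\ref{T3.1s0} --- guarantees $W_\Delta\mathcal F_2\in H^{1,0}(\Omega;\Delta)$, whence the sum lies in this space. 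This mapping property is the one genuine point that has to be checked; everything else is bookkeeping.

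I would then apply Lemma~\ref{Fto-fPsi} to $\mathcal F_0$ to obtain the unique couple $(f_\Delta,\Psi_\Delta)=\mathcal C_0\mathcal F_0\in L_2(\Omega)\times H^{-\frac12}(\partial\Omega)$ with $\mathcal F_0=\mathcal P_\Delta f_\Delta+V_\Delta\Psi_\Delta$ in $\Omega$. Subtracting $W_\Delta\mathcal F_2=W_\Delta\Phi_\Delta$ from both sides gives
\begin{equation*}
\mathcal F_1=\mathcal P_\Delta f_\Delta+V_\Delta\Psi_\Delta-W_\Delta\Phi_\Delta\quad\text{in }\Omega,
\end{equation*}
which is precisely the first required identity. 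For uniqueness, note that any admissible triple must have $\Phi_\Delta=\mathcal F_2$ from the second identity, and then the first identity forces $\mathcal P_\Delta f_\Delta+V_\Delta\Psi_\Delta=\mathcal F_1+W_\Delta\Phi_\Delta=\mathcal F_0$; the uniqueness clause of Lemma~\ref{Fto-fPsi} determines $(f_\Delta,\Psi_\Delta)$ uniquely, so the whole triple is unique.

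Finally, for the continuity of $\mathcal C_1$ I would read off the representation $\mathcal C_1(\mathcal F_1,\mathcal F_2)^\top=(\,\mathcal C_0(\mathcal F_1+W_\Delta\mathcal F_2),\ \mathcal F_2\,)^\top$, expressing $\mathcal C_1$ as a composition of the continuous linear map $(\mathcal F_1,\mathcal F_2)\mapsto\mathcal F_1+W_\Delta\mathcal F_2$ (continuous by the mapping property of $W_\Delta$ used above) with $\mathcal C_0$, together with the trivial continuous projection $(\mathcal F_1,\mathcal F_2)\mapsto\mathcal F_2$. Hence $\mathcal C_1$ is linear and bounded. There is no serious obstacle here: the construction is the one indicated by the parenthetical reference to Corollary~B.1 in \cite{ayele}, and the only substantive ingredient is the $H^{1,0}(\Omega;\Delta)$-regularity of the Laplace double-layer potential.
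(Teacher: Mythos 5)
Your proposal is correct and follows exactly the route the paper indicates: the paper's (one-line) proof is precisely to apply Lemma~\ref{Fto-fPsi} to $\mathcal F_0=\mathcal F_1+W_\Delta\mathcal F_2$ and set $\Phi_\Delta=\mathcal F_2$, which is the construction you carried out in detail. Your verification of $W_\Delta\mathcal F_2\in H^{1,0}(\Omega;\Delta)$ via the $a=1$ case of \eqref{WHs1Ga}, and your uniqueness and continuity arguments, correctly fill in the steps the paper leaves to the reader.
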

Employing now Lemma~\ref{A2} for $\mathcal F_{1}=\widetilde{\mathcal F}_1\in H^{1,0}(\Omega;\Delta)$ and 
$\mathcal F_{2}=\gamma^+\widetilde{\mathcal F}_1-\widetilde{\mathcal F}_2\in H^{\frac{1}{2}}(\partial\Omega)$, we get the next assertion.
\begin{lemma}
\label{A3}  For any couple 
$(\widetilde{\mathcal F}_1,\widetilde{\mathcal F}_2)^\top\in
H^{1,0}(\Omega ;\Delta)\times H^{\frac{1}{2}}(\partial\Omega)
$
there exists a unique triple
$$
(f_\Delta,\Psi_\Delta,\Phi_\Delta)^\top=\widetilde{\mathcal C}_1\,(\widetilde{\mathcal F}_1,\widetilde{\mathcal F}_2)^\top \in
L_2(\Omega )\times H^{-\frac{1}{2}}(\partial\Omega)\times H^{\frac{1}{2}}(\partial\Omega)
$$
   such that
\begin{eqnarray*}
&& 
\widetilde{\mathcal F}_1={\mathcal P}_\Delta\,f_\Delta + V_\Delta\,\Psi_\Delta-W_\Delta\,\Phi_\Delta
\;\;{\rm in}\;\;  \Omega ,\\
&& 
 \widetilde{\mathcal F}_2= \gamma^+\widetilde{\mathcal F}_1-\Phi_\Delta \;\;{\rm on}\;\; \partial\Omega.
\end{eqnarray*}
Moreover, the operator
$$ \widetilde{\mathcal C}_1\,:\,H^{1,\,0}(\Omega ;\Delta)\times H^{\frac{1}{2}}(\partial\Omega)\to
L_2(\Omega )\times H^{-\frac{1}{2}}(\partial\Omega)\times H^{\frac{1}{2}}(\partial\Omega)
$$
  is linear and continuous.
\end{lemma}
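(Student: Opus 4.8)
The plan is to obtain this lemma as an immediate consequence of Lemma~\ref{A2} via a continuous linear change of the data. First I would introduce the transformed pair
\[
(\mathcal F_1,\mathcal F_2)^\top := \bigl(\widetilde{\mathcal F}_1,\ \gamma^+\widetilde{\mathcal F}_1-\widetilde{\mathcal F}_2\bigr)^\top .
\]
Since $\widetilde{\mathcal F}_1\in H^{1,0}(\Omega;\Delta)\subset H^{1}(\Omega)$, the trace $\gamma^+\widetilde{\mathcal F}_1$ lies in $H^{\frac12}(\partial\Omega)$, so together with $\widetilde{\mathcal F}_2\in H^{\frac12}(\partial\Omega)$ we indeed have $(\mathcal F_1,\mathcal F_2)^\top\in H^{1,0}(\Omega;\Delta)\times H^{\frac12}(\partial\Omega)$, which is precisely the admissible input space for Lemma~\ref{A2}.

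Next I would apply Lemma~\ref{A2} to this pair. It furnishes a unique triple $(f_\Delta,\Psi_\Delta,\Phi_\Delta)^\top=\mathcal C_1(\mathcal F_1,\mathcal F_2)^\top$ satisfying $\mathcal F_1=\mathcal P_\Delta f_\Delta+V_\Delta\Psi_\Delta-W_\Delta\Phi_\Delta$ in $\Omega$ and $\mathcal F_2=\Phi_\Delta$ on $\partial\Omega$. Substituting the definitions of $\mathcal F_1$ and $\mathcal F_2$, the first relation becomes verbatim the first required identity of Lemma~\ref{A3}, while the second relation reads $\gamma^+\widetilde{\mathcal F}_1-\widetilde{\mathcal F}_2=\Phi_\Delta$, i.e. $\widetilde{\mathcal F}_2=\gamma^+\widetilde{\mathcal F}_1-\Phi_\Delta$, which is the second required identity. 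Uniqueness of the triple for Lemma~\ref{A3} follows from uniqueness in Lemma~\ref{A2}, because the transformed data $(\mathcal F_1,\mathcal F_2)$ is uniquely determined by $(\widetilde{\mathcal F}_1,\widetilde{\mathcal F}_2)$.

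Finally, for the mapping property I would set $\widetilde{\mathcal C}_1:=\mathcal C_1\circ J$, where
\[
J(\widetilde{\mathcal F}_1,\widetilde{\mathcal F}_2)^\top:=\bigl(\widetilde{\mathcal F}_1,\ \gamma^+\widetilde{\mathcal F}_1-\widetilde{\mathcal F}_2\bigr)^\top .
\]
The operator $J$ is linear and continuous on $H^{1,0}(\Omega;\Delta)\times H^{\frac12}(\partial\Omega)$, since the trace $\gamma^+:H^{1,0}(\Omega;\Delta)\to H^{\frac12}(\partial\Omega)$ is continuous and the remaining components are identities. As $\mathcal C_1$ is linear and continuous by Lemma~\ref{A2}, the composition $\widetilde{\mathcal C}_1$ is linear and continuous with the stated domain and codomain. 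I do not anticipate any genuine obstacle: the argument is essentially a bookkeeping substitution, and the only analytic input is the standard continuity of the trace operator on $H^{1,0}(\Omega;\Delta)$.
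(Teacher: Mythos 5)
Your proposal is correct and is essentially identical to the paper's own argument: the paper obtains Lemma~\ref{A3} precisely by applying Lemma~\ref{A2} with $\mathcal F_{1}=\widetilde{\mathcal F}_1$ and $\mathcal F_{2}=\gamma^+\widetilde{\mathcal F}_1-\widetilde{\mathcal F}_2$, which is your substitution $J$. Your write-up merely makes explicit the bookkeeping (membership of the transformed data in the admissible space, uniqueness transfer, and continuity of the composition $\mathcal C_1\circ J$) that the paper leaves implicit.
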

Finally, denoting 
$f_*=af_\Delta$, $\Phi_*=\Phi_\Delta$, $\Psi_*=a\Psi_\Delta-a\Phi_\Delta\frac{\partial \ln a}{\partial n}$, 
where $f_\Delta$, $\Phi_\Delta$ and $\Psi_\Delta$ are the functions and distributions in Lemma~\ref{A3}, it implies the following statement if we take into account relations \eqref{ch4relP}, \eqref{ch4relSL} and \eqref{ch4relDL}.
\begin{lemma}
\label{A4}  For any couple 
$(\widetilde{\mathcal F}_1,\widetilde{\mathcal F}_2)^\top\in
H^{1,0}(\Omega ;\Delta)\times H^{\frac{1}{2}}(\partial\Omega)
$
there exists a unique triple
$$
(f_*,\Psi_*,\Phi_*)^\top=\widetilde{\mathcal C}_*\,(\widetilde{\mathcal F}_1,\widetilde{\mathcal F}_2)^\top \in
L_2(\Omega )\times H^{-\frac{1}{2}}(\partial\Omega)\times H^{\frac{1}{2}}(\partial\Omega)
$$
   such that
\begin{eqnarray*}
&& 
\widetilde{\mathcal F}_1={\mathcal P}\,f_* + V\,\Psi_*-W\,\Phi_*
\;\;{\rm in}\;\;  \Omega ,\\
&& 
 \widetilde{\mathcal F}_2= \gamma^+\widetilde{\mathcal F}_1-\Phi_* \;\;{\rm on}\;\; \partial\Omega.
\end{eqnarray*}
Moreover, the operator
$$ \widetilde{\mathcal C}_*\,:\,H^{1,\,0}(\Omega ;\Delta)\times H^{\frac{1}{2}}(\partial\Omega)\to
L_2(\Omega )\times H^{-\frac{1}{2}}(\partial\Omega)\times H^{\frac{1}{2}}(\partial\Omega)
$$
  is linear and continuous.
\end{lemma}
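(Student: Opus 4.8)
The plan is to reduce the variable-coefficient decomposition to the constant-coefficient one already established in Lemma~\ref{A3}, and then to transfer the result through the explicit relations \eqref{ch4relP}, \eqref{ch4relSL} and \eqref{ch4relDL} between the parametrix-based potentials $\mathcal P$, $V$, $W$ and their harmonic counterparts $\mathcal P_\Delta$, $V_\Delta$, $W_\Delta$. First I would apply Lemma~\ref{A3} to the given couple $(\widetilde{\mathcal F}_1,\widetilde{\mathcal F}_2)^\top$, obtaining the unique triple $(f_\Delta,\Psi_\Delta,\Phi_\Delta)^\top=\widetilde{\mathcal C}_1(\widetilde{\mathcal F}_1,\widetilde{\mathcal F}_2)^\top$ with $\widetilde{\mathcal F}_1=\mathcal P_\Delta f_\Delta+V_\Delta\Psi_\Delta-W_\Delta\Phi_\Delta$ in $\Omega$ and $\widetilde{\mathcal F}_2=\gamma^+\widetilde{\mathcal F}_1-\Phi_\Delta$ on $\partial\Omega$. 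I would then define $(f_*,\Psi_*,\Phi_*)$ exactly by the substitution announced before the statement, namely $f_*:=af_\Delta$, $\Phi_*:=\Phi_\Delta$ and $\Psi_*:=a\Psi_\Delta-a\Phi_\Delta\frac{\partial\ln a}{\partial n}$.

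The verification of the first identity is then a direct substitution. Expanding $\mathcal P f_*+V\Psi_*-W\Phi_*$ by means of \eqref{ch4relP}, \eqref{ch4relSL} and \eqref{ch4relDL} yields $\mathcal P_\Delta(f_*/a)+V_\Delta(\Psi_*/a)-W_\Delta\Phi_*+V_\Delta\!\big(\Phi_*\frac{\partial\ln a}{\partial n}\big)$; collecting the two single-layer terms into $V_\Delta\big(\Psi_*/a+\Phi_*\frac{\partial\ln a}{\partial n}\big)$ and inserting the definitions of $f_*$, $\Psi_*$, $\Phi_*$ returns precisely $\mathcal P_\Delta f_\Delta+V_\Delta\Psi_\Delta-W_\Delta\Phi_\Delta=\widetilde{\mathcal F}_1$. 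The second identity is immediate, since $\Phi_*=\Phi_\Delta$ gives $\widetilde{\mathcal F}_2=\gamma^+\widetilde{\mathcal F}_1-\Phi_*$ at once.

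For uniqueness and continuity I would observe that the correspondence $D\colon(f_\Delta,\Psi_\Delta,\Phi_\Delta)^\top\mapsto(f_*,\Psi_*,\Phi_*)^\top$ is an explicit linear map whose inverse is given by $f_\Delta=f_*/a$, $\Phi_\Delta=\Phi_*$, $\Psi_\Delta=\Psi_*/a+\Phi_*\frac{\partial\ln a}{\partial n}$. Thus uniqueness of $(f_*,\Psi_*,\Phi_*)$ follows from the uniqueness of $(f_\Delta,\Psi_\Delta,\Phi_\Delta)$ in Lemma~\ref{A3}, and $\widetilde{\mathcal C}_*=D\circ\widetilde{\mathcal C}_1$ is continuous as soon as $D$ is bounded. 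The one point needing care — the closest thing to an obstacle — is precisely the boundedness of $D$ on $L_2(\Omega)\times H^{-\frac12}(\partial\Omega)\times H^{\frac12}(\partial\Omega)$. Multiplication by $a$ and by $1/a$ is harmless on each factor because $a\in\mathcal C^\infty(\overline\Omega)$ satisfies \eqref{a-cond}; the delicate term is $\Phi_*\frac{\partial\ln a}{\partial n}$, since on a Lipschitz boundary $\frac{\partial\ln a}{\partial n}=\frac1a\,n\cdot\nabla a$ is only an $L^\infty(\partial\Omega)$ function. I would dispose of it through the chain $H^{\frac12}(\partial\Omega)\hookrightarrow L_2(\partial\Omega)\xrightarrow{\ \times\frac{\partial\ln a}{\partial n}\ }L_2(\partial\Omega)\hookrightarrow H^{-\frac12}(\partial\Omega)$, which shows that multiplication by this $L^\infty$ factor maps $H^{\frac12}(\partial\Omega)$ continuously into the target $H^{-\frac12}(\partial\Omega)$, so that indeed $\Psi_*\in H^{-\frac12}(\partial\Omega)$ and $D$ is bounded.
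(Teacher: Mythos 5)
Your proposal is correct and takes essentially the same route as the paper: the paper obtains Lemma~\ref{A4} from Lemma~\ref{A3} by precisely the substitution $f_*=af_\Delta$, $\Phi_*=\Phi_\Delta$, $\Psi_*=a\Psi_\Delta-a\Phi_\Delta\frac{\partial \ln a}{\partial n}$ combined with relations \eqref{ch4relP}, \eqref{ch4relSL} and \eqref{ch4relDL}, which is exactly your construction and verification. Your additional check that multiplication by the $L^\infty$ factor $\frac{\partial \ln a}{\partial n}$ maps $H^{\frac{1}{2}}(\partial\Omega)$ boundedly into $H^{-\frac{1}{2}}(\partial\Omega)$ merely fills in a detail the paper leaves implicit.
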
 

\section*{Conclusions}
A new parametrix for the diffusion equation in a continuously non-homogeneous medium (with variable coefficient) with a Lipschitz boundary has been analysed in this paper. Mapping properties of the corresponding parametrix-based surface and volume potentials have been shown in corresponding Sobolev spaces. 

A BDIE system, based on a new parametrix, for the original BVP has been obtained. The equivalence between the BDIE system and the BVP has been shown along with the invertibility of the matrix operator defining the BDIE system. 

Analogous results have been obtained for exterior domains, see \cite{carlos3}, following an approach similar to the one in \cite{exterior}.

A generalisation  to less smooth coefficients and more general PDE right-hand side can be also consider following \cite{mikhailovlipschitz}.  Moreover, these results can be generalised to Bessov spaces as in \cite{miksolandreg}. 

Analysing BDIEs for different parametrices, i.e.  depending on the variable coefficient $a(x)$ or $a(y)$, is crucial to understand the analysis of BDIEs derived with parametrices that depend on the variable coefficient $a(x)$ and $a(y)$ at the same, as it is the case for the Stokes system, see \cite{carlos1,carlos4,carlos5}.

\end{document}